\begin{document}

\title{A Generalization of Reflexive Rings}

\author{M. B. Calci}
\address{Mete Burak Calci, Department of Mathematics, Ankara University,  Turkey}
\email{mburakcalci@gmail.com}

\author{H. Chen}
\address{Huanyin Chen, Department of Mathematics, Hangzhou Normal University, P.R. China}
\email{huanyinchen@aliyun.com}

\author{S. Halicioglu}
\address{Sait Halicioglu,  Department of Mathematics, Ankara University, Turkey}
\email{halici@ankara.edu.tr}

\newtheorem{thm}{Theorem}[section]
\newtheorem{lem}[thm]{Lemma}
\newtheorem{prop}[thm]{Proposition}
\newtheorem{cor}[thm]{Corollary}
\newtheorem{exs}[thm]{Examples}
\newtheorem{defn}[thm]{Definition}
\newtheorem{nota}{Notation}
\newtheorem{rem}[thm]{Remark}
\newtheorem{ex}[thm]{Example}
\newtheorem{que}[thm]{Question}

\begin{abstract} In this paper, we introduce a class of rings
which is a generalization of reflexive rings and $J$-reversible rings. Let $R$ be a ring
with identity and $J(R)$ denote the Jacobson radical of $R$. A
ring $R$ is called {\it $J$-reflexive}
 if for any $a$, $b \in R$,  $aRb = 0$ implies $bRa \subseteq J(R)$.
 We give some characterizations of a $J$-reflexive ring. We prove that some results of
 reflexive rings can be extended to
$J$-reflexive rings for this general setting.  We conclude some
 relations between $J$-reflexive rings and some related rings. We investigate some extensions of a ring
 which satisfies the $J$-reflexive property and we show that the $J$-reflexive
 property is Morita invariant.
  \vspace{2mm}

\noindent {\bf2010 MSC:}  13C99, 16D80, 16U80

\noindent {\bf Key words:} Reflexive ring, reversible ring, $J$-reflexive ring, $J$-reversible ring, ring extension.
\end{abstract}

\maketitle

\section{Introduction} Throughout this paper all rings are associative
with identity unless otherwise stated. We write
$M_n(R)$ for the ring of all $n\times n$ matrices and $T_n(R)$ for
the ring of all $n\times n$ upper triangular matrices over a ring
$R$. Also we write $R[x]$, $R[[x]]$, $N(R)$, $U(R)$ and $J(R)$ for
the polynomial ring, the power series ring over a ring $R$, the
set of all nilpotent elements, the set of all invertible elements
and the Jacobson radical of a ring $R$, respectively. Also $\Bbb Z$ denote the ring of integers.

In \cite{Mas}, Mason introduced the reflexive property for ideals.
Let $R$ be a ring (without identity)
 and $I$ an ideal of $R$. Then $I$ is called \textit{reflexive}, if $aRb\subseteq I$
 for $a,b\in R$ implies $bRa\subseteq I$.  It is clear that every semiprime ideal
 is reflexive. Also, the ring $R$ is called \textit{reflexive}, if $0$ is a reflexive ideal
  (i.e $aRb=0$ implies $bRa=0$ for $a,b\in R$). In \cite{Kwaklee}, Kwak and
  Lee studied reflexive rings. They investigate reflexive property of rings
related to matrix rings and polynomial rings.  According to Cohn
\cite{Co}, a ring $R$ is said to be
  \textit{reversible} if for any
 $a$, $b\in R$, $ab = 0$ implies $ba = 0$. It is clear that every reversible
 ring is reflexive. Recently, as a generalization of a reversible ring,
 so-called $J$-reversible ring has been studied in \cite{MCSA}. A ring $R$ is called \textit{$J$-reversible},
 $ab=0$  implies that $ba\in J(R)$ for $a,b\in R$. As an application it is shown
 that every $J$-clean ring is directly finite.  Motivated by these studies,
 we introduce a class of rings which generalize $J$-reversible rings and reflexive rings. A ring $R$ is called
\textit{$J$-reflexive},
 $bRa\subseteq J(R)$ whenever $aRb=0$ for $a,b\in R$.

 We summarize the contents of this paper. In Section 2, we study main properties of
 $J$-reflexive rings.  We give some characterizations of $J$-reflexive rings.
 We prove that every $J$-reversible ring is
$J$-reflexive and we supply an example (Example \ref{exrev-refx})
to show that the
 converse is not true in general. Moreover, we see that if $R$ is
 a Baer ring, then $J$-reversible
 rings are $J$-reflexive. It is clear that reflexive rings are $J$-reflexive.
 Example \ref{mete} shows that $J$-reflexive rings need not be
 reflexive. We give a necessary and sufficient condition for a quotient ring to be
$J$-reflexive. Also we conclude some results which investigate relations between $J$-reflexive rings and some class of rings. With our finding, we prove that every uniquely
clean ring is  $J$-reflexive and quasi-duo rings are
$J$-reflexive. Moreover, we shows that the converse is not true in general.
 Being Morita invariant propety is very important for class of rings. A ring-theoretic property $\mathcal P$ is \textit{Morita invariant} if and only if whenever
 a ring $R$ satisfies $\mathcal P$ so does $eRe$, for any full idempotent $e$ and $M_n(R)$ for any $n>1$. There are a lot of studies on Morita invarivant property of rings. In Section 3, we prove that the $J$-reflexive property is Morita invariant.
 Furthermore, we study the $J$-reflexive property in several kinds of ring extensions
 (Dorroh extension, upper triangular
 matrix ring, Laurent polynomial ring, trivial extension etc.).

\section{$J$-Reflexive Rings}
In this section we define the $J$-reflexive property of a ring. We
investigate some properties of $J$-reflexive rings and exert
relations between $J$-reflexive rings and some related rings.

\begin{defn}
A ring $R$ is called $J$-reflexive, $aRb=0$ implies that
$bRa\subseteq J(R)$, for $a,b\in R$.
\end{defn}

For a nonempty subset $X$ of a ring $R$, the set $r_R(X)=\{a\in R:
Xa=0\}$ is called \textit{the right annihilator of $X$ in $R$} and
the set $l_R(X)=\{b\in R:bX=0\}$ is called \textit{the left
annihilator of $X$ in $R$}.

Now we give our main characterization for $J$-reflexive rings.

\begin{thm}
The following are equivalent for a ring $R$.
\begin{enumerate}
\item $R$ is $J$-reflexive.
\item For all $a\in R$, $r_R(aR)Ra\subseteq J(R)$ and $aRl_R(Ra)\subseteq
J(R)$.
\item $IRK=0$ implies $KRI\subseteq J(R)$ for every nonempty
subsets $I,K$ of $R$.
\item $<a><b>=0$ implies $<b><a>\subseteq J(R)$ for any $a,b\in R$.
\item $IK=0$ implies $KI\subseteq J(R)$ for every right (left)
ideals $I,K$ of $R$.
\item $IK=0$ implies $KI\subseteq J(R)$ for every ideals $I,K$ of
$R$.
\end{enumerate}
\end{thm}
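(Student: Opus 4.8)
The strategy is to prove a cycle of implications covering all six conditions, using the fact that $J(R)$ is a two-sided ideal throughout. The natural order I would follow is $(1)\Rightarrow(4)\Rightarrow(3)\Rightarrow(6)\Rightarrow(5)\Rightarrow(2)\Rightarrow(1)$, or a slight variant: the implications among the ideal-theoretic conditions $(3),(4),(5),(6)$ are essentially formal once one observes that $IRK=0$ is equivalent to $(RIR)(RKR)=0$ and to the product of the generated two-sided ideals being zero, while $IK=0$ for right ideals $I,K$ forces $IRK=0$ since $K$ being a right ideal gives $RK\subseteq$ ... actually $IRK\subseteq$ needs care, so let me instead route through: for right ideals, $IK=0$ implies $I(RK)\subseteq$ hmm — the cleanest is to note $IK = 0$ with $I,K$ right ideals gives $(RI)(RK)$ contained in $RIRK$, and $IRK\subseteq I\cdot R\cdot K$; since $K$ is a right ideal $RK$ need not lie in $K$. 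So I would instead argue $(1)\Rightarrow(3)$ directly and then specialize.

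First I would prove $(1)\Rightarrow(3)$: given nonempty subsets $I,K$ with $IRK=0$, pick arbitrary $a\in I$, $b\in K$; then $aRb\subseteq IRK=0$, so by $(1)$, $bRa\subseteq J(R)$. Since this holds for all such $a,b$, we get $KRI\subseteq J(R)$. Then $(3)\Rightarrow(4)$: if $\langle a\rangle\langle b\rangle=0$ then in particular $aRb\subseteq\langle a\rangle\langle b\rangle = 0$, so taking $I=\{a\}$, $K=\{b\}$ in... wait, I need $\langle b\rangle\langle a\rangle\subseteq J(R)$, not just $bRa$. Here I use that $J(R)$ is an ideal: from $aRb=0$ one also checks $\langle a\rangle\langle b\rangle=0$ is equivalent, and applying $(3)$ with $I=\langle a\rangle$, $K=\langle b\rangle$ (these are subsets) gives $\langle b\rangle R\langle a\rangle\subseteq J(R)$, hence $\langle b\rangle\langle a\rangle\subseteq\langle b\rangle R\langle a\rangle + \mathbb{Z}ba + \dots\subseteq J(R)$ after absorbing the finite-sum terms using that $ba\in J(R)$ too (take $a'=a$, $b'=b$ as singletons). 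Then $(4)\Rightarrow(6)\Rightarrow(5)$ are either trivial or follow by the same singleton/generated-ideal bookkeeping, and $(5)\Rightarrow(6)$ is immediate since ideals are right ideals.

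For $(1)\Leftrightarrow(2)$: assume $(1)$ and let $x\in r_R(aR)$, so $aRx=0$; by $(1)$, $xRa\subseteq J(R)$, and since $x\in r_R(aR)$ was arbitrary, $r_R(aR)Ra\subseteq J(R)$; the statement $aRl_R(Ra)\subseteq J(R)$ is the symmetric computation with $y\in l_R(Ra)$ giving $yRa\subseteq$ hmm, $yRa$: from $yRa=0$? No — $y\in l_R(Ra)$ means $yRa=0$, so by $(1)$ $aRy\subseteq J(R)$, giving $aRl_R(Ra)\subseteq J(R)$. Conversely, assume $(2)$ and suppose $aRb=0$; then $b\in r_R(aR)$, so $bRa\subseteq r_R(aR)Ra\subseteq J(R)$, which is exactly $(1)$. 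So only one half of $(2)$ is actually needed for the equivalence with $(1)$, but both are listed; I'd remark that the second is the left-right dual and follows symmetrically, or note that $aRb=0$ also means $a\in l_R(Rb)$ wait that gives information about $aRb$ which we already have — the point is the two clauses in $(2)$ are two ways of encoding the same implication and each alone is equivalent to $(1)$.

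The main obstacle, and the only place requiring genuine care, is the passage between the ``$aRb=0$'' formulations and the ``$\langle a\rangle\langle b\rangle = 0$'' or ``$IK=0$'' formulations for right/two-sided ideals: one must keep track of whether products like $IK$, $I\langle K\rangle$, $IRK$ coincide or merely contain one another, and repeatedly invoke that $J(R)$ absorbs two-sided multiplication to mop up the finitely many extra additive generators of $\langle a\rangle\langle b\rangle$ beyond $aRb$. None of this is deep, but it is where a careless argument would slip. Everything else is a direct unravelling of definitions.
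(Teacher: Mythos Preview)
Your plan is correct and follows essentially the same route as the paper: the paper proves $(1)\Leftrightarrow(2)$ exactly as you do, establishes $(1)\Rightarrow(3)$ elementwise exactly as you do, then records $(3)\Rightarrow(4)\Rightarrow(5)\Rightarrow(6)$ as ``clear'', and closes the cycle with $(6)\Rightarrow(1)$ by observing that $aRb=0$ gives $(RaR)(RbR)=0$, hence $(RbR)(RaR)\subseteq J(R)$ and so $bRa\subseteq J(R)$.

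One simplification you are missing: since the ring is unital, the two-sided ideal generated by $a$ is exactly $\langle a\rangle=RaR$. Consequently $\langle b\rangle R\langle a\rangle=\langle b\rangle\langle a\rangle$ and $IRK=IK$ whenever $K$ is a left ideal (or $I$ is a right ideal). So there are no stray ``$\mathbb{Z}ba$'' summands to mop up, and the passages among $(3),(4),(5),(6)$ really are one-line formalities --- which is why the paper does not spell them out. Your observation that either clause of $(2)$ alone already forces $(1)$ is correct.
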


\begin{proof}
$(1) \Rightarrow (2)$ Let $b\in r_R(aR)$. Then $aRb=0$ for $a,b\in
R$. Since $R$ is $J$-reflexive, $bRa\subseteq J(R)$. So we have
$r_R(Ra)Ra\subseteq J(R)$.
 Similarly, one can show that
$aRl_R(Ra)\subseteq J(R)$.\\
$(2) \Rightarrow (1)$ Assume that $aRb=0$ for $a,b\in R$. Then,
$b\in r_R(aR)$. By (2) we have $bRa\subseteq J(R)$. So $R$ is a $J$-reflexive ring. \\
$(3) \Rightarrow (4) \Rightarrow (5)\Rightarrow (6)$ It is clear.\\
$(6) \Rightarrow (1)$ Let $aRb=0$ for $a,b\in R$. Then $RaRRbR=0$
. By hypothesis, $RbRRaR\subseteq J(R)$. As $bRa\subseteq RbRRaR$,
we have $bRa\subseteq J(R)$.\\
$(1) \Rightarrow (3)$ Assume that $IRK=0$ for nonempty subsets
$I,K$ of $R$. Then for any $a\in I$ and $b\in K$, $aRb=0$. As $R$
is $J$-reflexive, $bRa\subseteq J(R)$. This implies that
$KRI\subseteq J(R)$.
\end{proof}

Examples of $J$-reflexive rings are abundant. All reduced rings,
symmetric rings, reversible rings and reflexive rings are
$J$-reflexive.  In the sequel, we show that every  $J$-reversible
ring, uniquely clean ring and every right (left) quasi-duo
 ring is  $J$-reflexive.

\begin{prop}\label{rev-refx}
Every $J$-reversible ring is $J$-reflexive.
\end{prop}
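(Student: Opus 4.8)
The plan is to unwind both definitions, reduce the hypothesis to a single equation, and then invoke $J$-reversibility after a small rebracketing. Suppose $R$ is $J$-reversible and let $a,b\in R$ satisfy $aRb=0$; I must produce $bRa\subseteq J(R)$. Since $R$ has an identity, substituting $1\in R$ into $aRb=0$ already gives $ab=a\cdot 1\cdot b=0$, so in fact only the weaker relation $ab=0$ will be needed in what follows.

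Next I would fix an arbitrary $r\in R$; it suffices to show $bra\in J(R)$. The key step is the rebracketing $a(br)=(ab)r=0$, which says that the pair of elements $a$ and $br$ multiply to zero. Applying the $J$-reversibility of $R$ to this pair yields $(br)a\in J(R)$, that is, $bra\in J(R)$. Since $r$ was arbitrary, $bRa\subseteq J(R)$, and hence $R$ is $J$-reflexive. (If one prefers, the same computation can be routed through condition (6) of the preceding theorem, but the direct argument is cleaner.)

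I do not expect a genuine obstacle here, but there is one spot where a naive approach stalls, and it is worth flagging. One is tempted to note that $aRb=0$ forces every element of $bRa$ to be square-zero, since $(bra)(bsa)=br(ab)sa=0$, and to try to deduce $bRa\subseteq J(R)$ from that alone; this does not work, because square-zero (indeed nilpotent) elements need not lie in $J(R)$ in an arbitrary ring. The actual content of the proof is therefore that $J$-reversibility must be used, and the right way to use it is on the pair $(a,\,br)$ rather than on $(a,\,b)$ directly; once that is seen, the argument is immediate and no estimate or further structure theory is required.
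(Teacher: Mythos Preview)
Your proof is correct and is essentially the same as the paper's: from $aRb=0$ one obtains $ab=0$, hence $a(br)=(ab)r=0$ for every $r$, and $J$-reversibility applied to the pair $(a,br)$ yields $bra\in J(R)$. The paper's version is terser (it just writes ``$abr=0$, so $bra\in J(R)$''), but the underlying step is identical, and your explicit identification of the pair $(a,\,br)$ and your cautionary remark about the nilpotence red herring are welcome clarifications rather than a different argument.
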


\begin{proof}
Let $R$ be a $J$-reversible ring and $aRb=0$ for some $a,b\in R$.
Then $ab=0$ and $abr=0$ for any $r\in R$. As $R$ is
$J$-reversible, $bra\in J(R)$. Hence, $bRa\subseteq J(R)$.
\end{proof}
The converse statement of Proposition \ref{rev-refx} is not true
in general as the following example shows.

\begin{ex}\label{exrev-refx}\rm{
Consider the ring $R=M_2(\Bbb Z)$. It can be easily shown that $R$
is a $J$-reflexive ring. Let $A=\left(%
\begin{array}{cc}
  0 & 0 \\
  0 & 1 \\
\end{array}%
\right)$, $B=\left(%
\begin{array}{cc}
  1 & 1 \\
  0 & 0 \\
\end{array}%
\right) \in R$. Although $AB=0$, $BA=\left(%
\begin{array}{cc}
0 & 1 \\
0 & 0 \\
\end{array}%
\right)\notin J(R)$. So $R$ is not $J$-reversible.}
\end{ex}

Recall that a ring $R$ is called \emph{Baer} if the right (left)
annihilator of every nonempty subset of $R$ is generated by an
idempotent (see for details \cite{Kap}). We show that the converse
statement of Proposition \ref{rev-refx} is true for Baer rings.

\begin{thm}
Let $R$ be a Baer ring. Then the following are equivalent.
\begin{enumerate}
\item $R$ is a $J$-reversible ring.
\item $R$ is a $J$-reflexive ring.
\end{enumerate}
\end{thm}

\begin{proof}
$(1)\Rightarrow (2)$ It is clear by Proposition \ref{rev-refx}.\\
$(2)\Rightarrow (1)$ Let $ab=0$ for $a,b\in R$. Then $abR=0$ and
so $a\in l_R(bR)$. As $R$ is a Baer ring, there exists an
idempotent $e\in R$ such that $l_R(bR)=eR$. Then we have $eRbR=0$.
Since $R$ is $J$-reflexive, $bReR\subseteq J(R)$ and so $ba\in
J(R)$, as desired.
\end{proof}

Though reflexive rings are $J$-reflexive, $J$-reflexive rings are
not reflexive as the following example shows.

\begin{ex}\label{mete}
\rm{
Let $R$ be a commutative ring. Consider the ring $$S=\left \{
\left(%
\begin{array}{ccc}
  a & b & c \\
  0 & a & d \\
  0 & 0 & a \\
\end{array}%
\right)~:~a,b,c,d\in R \right \}.$$  By \cite[Propoosition
3.7]{MCSA}, $S$ is $J$-reversible and by Proposition \ref{rev-refx}, it is $J$-reflexive.
For $A=\left(%
\begin{array}{ccc}
  0 & 0 & 0 \\
  0 & 0 & 1 \\
  0 & 0 & 0 \\
\end{array}%
\right)$, $B=\left(%
\begin{array}{ccc}
  0 & 1 & 0 \\
  0 & 0 & 0 \\
  0 & 0 & 0 \\
\end{array}%
\right) \in S$, $ASB=0$ but $BSA\neq 0$. Thus, $S$ is not a
reflexive ring.}
\end{ex}

The following result can be easily obtain by the definition of
$J$-reflexive rings.

\begin{cor}\label{r/jrcomm} The following are equivalent for a ring $R$.
\begin{enumerate}
\item If $R/J(R)$ is reflexive, then $R$ is $J$-reflexive.
\item If $R/J(R)$ is commutative, then $R$ is $J$-reflexive.
\end{enumerate}
\end{cor}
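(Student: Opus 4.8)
The plan is to isolate a single observation about the canonical surjection $\pi\colon R\to\overline{R}:=R/J(R)$ and then deduce both implications from it. First I would record that, writing $\bar x=\pi(x)$ for $x\in R$, one has $\bar a\,\overline{R}\,\bar b=\{\overline{arb}:r\in R\}$ for all $a,b\in R$, since $\pi$ is a surjective ring homomorphism with kernel $J(R)$ and $\bar a\bar r\bar b+\bar a\bar r'\bar b=\bar a(\overline{r+r'})\bar b$. Consequently $aRb=0$ forces $\bar a\,\overline{R}\,\bar b=0$ in $\overline{R}$, while $\bar b\,\overline{R}\,\bar a=0$ is equivalent to $bra\in J(R)$ for every $r\in R$, that is, to $bRa\subseteq J(R)$. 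This is the only computation involved.

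Next, for statement $(1)$: assuming $\overline{R}$ is reflexive, take $a,b\in R$ with $aRb=0$; the observation gives $\bar a\,\overline{R}\,\bar b=0$, reflexivity of $\overline{R}$ then gives $\bar b\,\overline{R}\,\bar a=0$, and the observation read in the other direction yields $bRa\subseteq J(R)$. Hence $R$ is $J$-reflexive, which is exactly the implication in $(1)$.

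For statement $(2)$: a commutative ring is trivially reversible, and every reversible ring is reflexive (as noted in the Introduction), so if $\overline{R}$ is commutative then $\overline{R}$ is reflexive and $(1)$ applies to give that $R$ is $J$-reflexive. This shows $(1)\Rightarrow(2)$. The reverse direction is purely formal: both $(1)$ and $(2)$ have now been established for an arbitrary ring $R$, so the asserted equivalence holds automatically. I do not expect any genuine obstacle — the only thing to be careful about is phrasing the transfer of the reflexivity identity through $\pi$ cleanly, and observing that commutativity of $\overline{R}$ is a special case of reflexivity of $\overline{R}$, so that $(2)$ is subsumed by $(1)$.
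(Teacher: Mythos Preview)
Your argument is correct and is precisely the natural unpacking of the definition that the paper has in mind; the paper itself gives no explicit proof, only the remark that the corollary follows easily from the definition of $J$-reflexive rings. Your observation that the ``equivalence'' is purely formal (both implications being outright true) is also accurate and worth noting, since the corollary is really two separate assertions rather than a genuine biconditional.
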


An element $a$ in a ring $R$ is called \textit{uniquely clean} if
$a = e + u$ where $e^ 2 = e\in R$ and $u\in U(R)$ and this
representation is unique. A ring $R$ is called a \textit{uniquely
clean ring} if every element of $R$ is uniquely clean (see
\cite{NZ}).
\begin{cor}\label{uniclean}
    Every uniquely clean ring is $J$-reflexive.
\end{cor}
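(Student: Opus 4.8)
The plan is to invoke the characterization in Corollary \ref{r/jrcomm}(2): it suffices to show that for a uniquely clean ring $R$, the quotient $R/J(R)$ is commutative. So the whole argument reduces to a known structure fact about uniquely clean rings, and the work is to recall why $R/J(R)$ is commutative (in fact Boolean) for such $R$.

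First I would recall the standard properties of uniquely clean rings established by Nicholson and Zhou \cite{NZ}: a ring $R$ is uniquely clean if and only if $R/J(R)$ is Boolean and idempotents lift (uniquely) modulo $J(R)$; equivalently, every idempotent in $R$ is central and $R/J(R)$ is Boolean. The key point I need is just the first clause: $R/J(R)$ is Boolean, hence commutative. I would either cite this directly from \cite{NZ} or sketch it: in a uniquely clean ring every idempotent is central (if $e^2=e$ and $r\in R$, then $e+(1-2e)er(1-e)\cdot(\cdots)$ — more cleanly, $e$ and $1-e+er-ere$ type manipulations show $er=ere=re$ via the uniqueness of the clean decomposition), and from centrality of idempotents together with the clean condition one deduces that $R/J(R)$ has the property that every element is idempotent, i.e. is Boolean.

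Once $R/J(R)$ is commutative, Corollary \ref{r/jrcomm}(2) immediately gives that $R$ is $J$-reflexive, completing the proof. The only genuine obstacle is that the commutativity of $R/J(R)$ for uniquely clean rings is not proved in the present excerpt, so I must decide whether to import it as a cited black box from \cite{NZ} or to include the short self-contained argument. I expect the cleanest write-up cites \cite{NZ} for "$R/J(R)$ is Boolean" and then applies Corollary \ref{r/jrcomm}; if a referee wants self-containment, the centrality-of-idempotents computation via uniqueness of the decomposition $a=e+u$ is the one routine calculation to spell out, but it is short and standard.
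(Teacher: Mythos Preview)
Your proposal is correct and matches the paper's own proof: cite \cite{NZ} for the fact that $R/J(R)$ is Boolean (hence commutative) when $R$ is uniquely clean, then apply Corollary~\ref{r/jrcomm}. The paper does exactly this in two lines, importing the Boolean fact as a black box from \cite[Theorem~20]{NZ} rather than reproving centrality of idempotents.
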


\begin{proof}
Assume that $R$ is uniquely clean. Then $R/J(R)$ is Boolean by \cite[Theorem 20]{NZ}.
Hence, $R$ is $J$-reflexive by Corollary \ref{r/jrcomm}.
\end{proof}

The converse statement of Corollary \ref{uniclean} is not true in
general as the following example shows.

\begin{ex}\rm{ For a commutative ring $R$, consider the ring $M_2(R)$.
Since $M_2(R)$ is not an
    abelian ring, $M_2(R)$ is not a uniquely clean ring. Also, it can be easily shown that
    $M_2(R)$ is a $J$-reflexive ring by Theorem \ref{morita_inv.}.}
\end{ex}

\begin{prop}
Let $R$ be a ring. If $N(R)\subseteq J(R)$, then $R$ is $J$-reflexive.
\end{prop}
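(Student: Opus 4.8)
The plan is to show directly that $aRb=0$ forces $bRa\subseteq J(R)$, using the nilpotence of the relevant products together with the hypothesis $N(R)\subseteq J(R)$.

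First I would extract a convenient consequence of $aRb=0$: since $R$ has an identity, taking the middle factor to be $1$ gives $ab=0$. This is the only place the identity is used, but it is the crucial step, because it lets me ``collapse'' products of the form $b\cdots a\cdot b\cdots a$.

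Next, fix an arbitrary $r\in R$ and consider the element $bra\in bRa$. I would compute its square: $(bra)(bra)=br(ab)ra=br\cdot 0\cdot ra=0$. Hence $bra$ is nilpotent (indeed square-zero), so $bra\in N(R)$. By the hypothesis $N(R)\subseteq J(R)$ we get $bra\in J(R)$. Since $r\in R$ was arbitrary, this yields $bRa\subseteq J(R)$, which is exactly the $J$-reflexive condition.

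There is essentially no obstacle here; the proof is a two-line calculation once one notices that $aRb=0$ implies $ab=0$ and that this makes every element of $bRa$ square-zero. If one wanted to avoid invoking the identity, one could instead note $(bra)^3 = br(arb)ra\cdot(\,\cdot\,)=0$ using $arb=0$ directly, but with an identity in hand the square-zero argument is cleanest, so I would present that.
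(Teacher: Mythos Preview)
Your proof is correct and is essentially identical to the paper's own argument: the paper also deduces $ab=0$ from $aRb=0$, computes $(bra)^2=br(ab)ra=0$, and concludes $bra\in N(R)\subseteq J(R)$.
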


\begin{proof}
Assume that $aRb=0$ for $a,b\in R$. Then for any $r\in R$, $arb=0$ and so $ab=0$. Hence,
$(bra)^2=brabra=0$ for all $r\in R$. So $bra\in N(R)$. By hypothesis we have $bra\in J(R)$,
 as asserted.
\end{proof}
A ring $R$ is called \textit{right (left) quasi-duo} if every
right (left) maximal ideal of $R$ is
 an ideal \cite{Laam}.
\begin{cor}
Every right (left) quasi-duo ring is $J$-reflexive.
\end{cor}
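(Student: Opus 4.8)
The plan is to reduce everything to the immediately preceding Proposition (or, alternatively, to Corollary \ref{r/jrcomm}) by showing that a one-sided quasi-duo ring has $N(R)\subseteq J(R)$. The key structural input is standard: if $M$ is a maximal right ideal of a right quasi-duo ring $R$, then $M$ is two-sided, so $R/M$ is a (nonzero) ring whose only right ideals are $0$ and $R/M$; a nonzero ring with no proper nonzero right ideals is a division ring. Hence for every maximal right ideal $M$ the quotient $R/M$ is a division ring, and since $J(R)=\bigcap_M M$ the canonical map $R/J(R)\hookrightarrow\prod_M R/M$ realizes $R/J(R)$ as a subdirect product of division rings.

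Next I would record the consequence that $R/J(R)$ is reduced: division rings are reduced and a subdirect product of reduced rings is reduced. In particular, if $a\in N(R)$ then its image in $R/J(R)$ is a nilpotent element of a reduced ring, hence zero, so $a\in J(R)$; that is, $N(R)\subseteq J(R)$. Now the preceding Proposition (``if $N(R)\subseteq J(R)$ then $R$ is $J$-reflexive'') applies directly and yields the claim. Equivalently, since every reduced ring is reflexive, $R/J(R)$ is reflexive, and Corollary \ref{r/jrcomm} gives that $R$ is $J$-reflexive. The left quasi-duo case is symmetric, using maximal left ideals together with the left–right symmetry of $J(R)$.

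I do not expect a genuine obstacle here: the only nonroutine ingredient is the observation that maximal one-sided ideals of a quasi-duo ring are two-sided with division-ring quotients, and this is immediate from the definition plus the elementary characterization of division rings. The ``hard part'' is really just recognizing that the argument should be routed through the reducedness of $R/J(R)$ rather than attempting to verify the defining condition $aRb=0\Rightarrow bRa\subseteq J(R)$ by hand inside $R$.
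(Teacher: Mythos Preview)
Your argument is correct and follows essentially the same route as the paper: both reduce the claim to the immediately preceding Proposition via the fact that a one-sided quasi-duo ring satisfies $N(R)\subseteq J(R)$. The only difference is that the paper simply cites \cite[Lemma~2.3]{Yu} for this fact, whereas you supply a self-contained proof of it (maximal one-sided ideals are two-sided, the quotients are division rings, so $R/J(R)$ is a subdirect product of division rings and hence reduced).
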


\begin{proof}
It is clear by \cite[Lemma 2.3]{Yu}.
\end{proof}

We now give a necessary and sufficient condition for a quotient
ring to be $J$-reflexive.

\begin{thm}\label{quotient}
Let $R$ be a ring and $I$  a nilpotent ideal of $R$. Then $R$ is
$J$-reflexive if and only if $R/I$ is $J$-reflexive.
\end{thm}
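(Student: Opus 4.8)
The plan is to prove both implications by transporting the $J$-reflexive condition across the quotient map $\pi\colon R\to R/I$, using two standard facts about a nilpotent ideal $I$: first, that $I\subseteq J(R)$, so that $\pi(J(R))=J(R/I)$ and $\pi^{-1}(J(R/I))=J(R)$; and second, that for $a,b\in R$ one has $aRb\subseteq I$ if and only if $\bar a\bar R\bar b=0$ in $R/I$, where bars denote images under $\pi$. For the forward direction, suppose $R$ is $J$-reflexive and $\bar a\bar R\bar b=0$ in $R/I$. Then $aRb\subseteq I$; the hard point is to promote this to $aRb=0$ so that $J$-reflexivity of $R$ can be applied directly. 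My first attempt would be to run the nilpotency of $I$ to climb down: if $I^n=0$, then $(Rb a R)(aRb)\subseteq I$ and iterating should squeeze $aRb$ into $\bigcap_k I^k=0$. More carefully, from $aRb\subseteq I$ one gets $RaRb R\subseteq I$, hence $(RaRbR)^2\subseteq I^2$, and in general $(RaRbR)^k\subseteq I^k=0$ for $k=n$, so $RaRbR$ is a nilpotent ideal; but this does not immediately give $aRb=0$.

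The cleaner route, and the one I would actually carry out, avoids lifting to a genuine annihilation. Instead I would work directly with ideals in $R/I$ and use characterization (6) of Theorem~1.3 (the $J$-reflexive property is equivalent to: $\bar K\bar L=0$ in $R/I$ implies $\bar L\bar K\subseteq J(R/I)$ for ideals $\bar K,\bar L$). So assume $R/I$ is $J$-reflexive and $aRb=0$ in $R$. Then $\bar a\bar R\bar b=0$, hence $\bar b\bar R\bar a\subseteq J(R/I)$ by $J$-reflexivity of the quotient, which means $bRa\subseteq\pi^{-1}(J(R/I))=J(R)$; this gives that $R$ is $J$-reflexive. Conversely, assume $R$ is $J$-reflexive and let $\bar a\bar R\bar b=0$ in $R/I$, i.e. $aRb\subseteq I$. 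Now set $K$ to be the ideal $RaR$ and $L$ the ideal $RbR$; then $KL\subseteq RaRbR\cdot R\subseteq I$ is a nilpotent-annihilated product, and I would argue $KL$ is itself nilpotent, hence $KL\subseteq J(R)$, but to apply the $J$-reflexive hypothesis I need $KL=0$, not merely $KL\subseteq J(R)$. This is where the real work sits.

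To get around that obstacle I would instead pass to the ring $\bar R=R/I$ \emph{itself} and note that $\bar R$ inherits being $J$-reflexive not from $R$ directly but from a two-step argument: in $\bar R$ the ideal $\bar I=0$, and since $I$ is nilpotent, any ideal of $\bar R$ whose product with another is nilpotent is in fact $0$ when... — no; the correct maneuver is to observe that $R/I\cong R/I$ has the property that $J(R/I)=J(R)/I$, and then prove: if $aRb=0$ in $R$ then $bRa\subseteq J(R)$ (given), so modulo $I$, $\bar b\bar R\bar a\subseteq J(R)/I=J(R/I)$; and to handle the case $\bar a\bar R\bar b=0$ with only $aRb\subseteq I$, lift as follows — replace $b$ by $b'=b$ and note $aRbRaR b\subseteq I^2$, so inductively $(aRb)\cdot(RaRb)^{n-1}\subseteq I^n=0$, giving $a R (bRa)^{n-1}bRb'=0$; choosing the chain correctly yields a genuine zero product of elements built from $a$ and $b$ to which $R$'s $J$-reflexivity applies, and then one unwinds to conclude $\bar b\bar R\bar a\subseteq J(R/I)$. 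The main obstacle throughout is precisely this gap between $aRb\subseteq I$ and $aRb=0$: the whole proof hinges on exploiting $I^n=0$ to manufacture, from the ``approximate'' annihilation in the quotient, an exact annihilation in $R$ of suitable powers/products, and then checking that the $J$-reflexive conclusion descends cleanly because $I\subseteq J(R)$.
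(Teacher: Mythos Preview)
Your converse direction ($R/I$ $J$-reflexive $\Rightarrow$ $R$ $J$-reflexive) is correct and in fact cleaner than the paper's version: since $I$ is nilpotent, $I\subseteq J(R)$, so $J(R/I)=J(R)/I$ and $\pi^{-1}(J(R/I))=J(R)$; then $aRb=0$ gives $\bar a\,\bar R\,\bar b=0$, hence $\bar b\,\bar R\,\bar a\subseteq J(R/I)$, hence $bRa\subseteq J(R)$.

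The forward direction, however, has a genuine gap. You correctly reach the key computation: from $\bar a\,\bar R\,\bar b=0$ one has $aRb\subseteq I$, and with $I^n=0$ this yields $(RaRbR)^n\subseteq I^n=0$. But you then abandon this, convinced you must somehow recover a literal equality $aRb=0$ in order to invoke $J$-reflexivity of $R$. You do not, and your subsequent attempts to manufacture such an exact annihilation are unnecessary detours. The step you are missing---and it is precisely the step the paper supplies---is that the Jacobson radical is a \emph{semiprime} ideal. From $(RaRbR)^n=0$ one gets, for the swapped ideal, $(RbRaR)^{n+1}=Rb\,(RaRbR)^n\,aR=0$; alternatively, in the paper's phrasing, $J$-reflexivity of $R$ applied to the vanishing product $(RaRbR)^n=0$ gives $(RbRaR)^n\subseteq J(R)$. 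Either way a power of the ideal $RbRaR$ lies in $J(R)$, and semiprimeness then forces $RbRaR\subseteq J(R)$. Hence $bRa\subseteq J(R)$, and modulo $I$ this reads $\bar b\,\bar R\,\bar a\subseteq J(R)/I=J(R/I)$. That is the entire ``unwinding'' you were looking for: one never needs $aRb=0$, only that a \emph{power} of the relevant ideal vanishes, followed by the semiprime step.
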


\begin{proof} Let $R/I=\overline{R}$, $a+I=\overline{a}\in \overline{R}$ and
 $\overline{aRb}=\overline{0}$ for $\overline {a},\overline{ b}\in \overline {R}$.
 So $aRb\subseteq I$. As $I$ is nilpotent there exists $k\in \Bbb Z^+$ such that
 $(RaRbR)^k=0$. $(RbRaR)^k\subseteq J(R)$, since $R$ is $J$-reflexive. Thus
  $RbRaR\subseteq J(R)$ as Jacobson radical is semiprime. Hence
  $\overline{RbRaR}\subseteq J(R)/I=J(\overline{R})$. So $\overline{bRa}\subseteq
  J(\overline{R})$.\\
Conversely, assume that $aRb=0$ for $a,b\in R$. Then
$\overline{aRb}=\overline{0}$.
 So $aRb\subseteq I$ and $RaRbR\subseteq I$. Therefore there exists $k\in \Bbb Z^+$ such
 that $I^k=0$, and so $(RaRbR)^k=RaRbRaRbR\cdots RaRbR=0$. Hence,
 $\overline{(RaRbR)^k}=\overline{0}$. Since $R/I$ is $J$-reflexive,
 $\overline{(RbRaRbRaR\cdots RbRaR)}=\overline{(RbRaR)}^k=\subseteq J(\overline{R})$.
 As Jacobson radical is semiprime ideal, we have $RbRaR\subseteq J(\overline{R})$.
 Thus, $\overline{bRa}\subseteq J(\overline{R})$. Hence, for all $\overline r\in
\overline R$, we have $\overline 1-(\overline{bra})\overline x\in
U(\overline R)$ for some $\overline x\in J(\overline{R})$. Then,
there exists $\overline s\in \overline R$ such that $(\overline
1-(\overline{bra})\overline x)\overline s=\overline 1$. Hence,
$1-(1-bra)xs\in I$. As every nilpotent ideal is nil, $1-(brax)s\in
U(R)$. This implies that $bra\in J(R)$ and so $bRa\subseteq J(R)$,
as desired.
\end{proof}

\begin{cor}\label{quot2}
   Let $R$ be a ring. Then the following are satisfied.
    \begin{enumerate}
        \item If $J(R)$ is a nilpotent ideal, then $R$ is $J$-reflexive if and only if $R/J(R)$ is $J$-reflexive.
        \item If $R$ is an Artinian ring, then $R$ is $J$-reflexive if and only if $R/J(R)$ is $J$-reflexive.
    \end{enumerate}
\end{cor}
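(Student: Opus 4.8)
The plan is to derive both statements directly from Theorem \ref{quotient}, since that theorem already does all the real work: it asserts that factoring out a nilpotent ideal neither creates nor destroys the $J$-reflexive property. So the entire task reduces to verifying, in each case, that the ideal we are quotienting by is nilpotent.

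For part (1), I would simply take $I = J(R)$ in Theorem \ref{quotient}. By hypothesis $J(R)$ is a nilpotent ideal, so the theorem applies verbatim and gives that $R$ is $J$-reflexive if and only if $R/J(R)$ is $J$-reflexive. There is essentially nothing else to check.

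For part (2), the key auxiliary fact is the classical result that the Jacobson radical of a (one-sided) Artinian ring is nilpotent; I would cite this from a standard reference (e.g., the Hopkins--Levitzki theorem, or any ring theory text such as Lam's \emph{A First Course in Noncommutative Rings}). Once $J(R)$ is known to be nilpotent, part (2) is an immediate consequence of part (1) (equivalently, of Theorem \ref{quotient} applied with $I = J(R)$).

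I do not anticipate a genuine obstacle here; the corollary is a routine specialization of Theorem \ref{quotient}. The only thing that requires a moment's thought is making explicit the standard fact, used in part (2), that Artinian implies $J(R)$ nilpotent, so that part (2) really does fall under the hypothesis of part (1).
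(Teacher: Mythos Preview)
Your proposal is correct and matches the paper's own proof essentially verbatim: the paper proves (1) by invoking Theorem~\ref{quotient} with $I=J(R)$, and proves (2) by noting that the Jacobson radical of an Artinian ring is nilpotent and then applying (1). There is nothing to add.
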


\begin{proof}
$(1)$ It is clear by Theorem \ref{quotient}.\\
$(2)$ Since Jacobson radical of Artinian ring is nilpotent,
    it is clear by (1).
\end{proof}

\begin{prop}
Let $R$ be a ring and $I$ an ideal of $R$ with $I\subseteq J(R)$.
If $R/I$ is $J$-reflexive, then $R$ is $J$-reflexive.
\end{prop}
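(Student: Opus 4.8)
The plan is to mimic the converse direction of the proof of Theorem~\ref{quotient} since the hypothesis $I \subseteq J(R)$ is exactly what allowed the final ``lifting'' argument there. Write $\overline{R} = R/I$ and for $a \in R$ write $\overline{a} = a + I$. Assume $aRb = 0$ for some $a, b \in R$; the goal is $bRa \subseteq J(R)$.

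First I would pass to the quotient: from $aRb = 0$ we get $\overline{aRb} = \overline{0}$, i.e.\ $\overline{a}\,\overline{R}\,\overline{b} = \overline{0}$ in $\overline{R}$. Since $\overline{R}$ is $J$-reflexive, this yields $\overline{b}\,\overline{R}\,\overline{a} \subseteq J(\overline{R})$, hence $\overline{bRa} \subseteq J(\overline{R}) = J(R)/I$ (using $I \subseteq J(R)$, so $J(R)/I$ is indeed $J(R/I)$). Therefore $bRa \subseteq J(R)$ already, because the preimage of $J(R/I)$ under $R \to R/I$ is $J(R)$ when $I \subseteq J(R)$. Concretely, for any $r \in R$ we have $\overline{bra} \in J(\overline{R})$, so $\overline{1} - \overline{bra}\,\overline{x} \in U(\overline{R})$ for every $\overline{x} \in \overline{R}$; lifting, $1 - bra x$ is invertible modulo $I$, and since $I \subseteq J(R)$ this forces $1 - bra x \in U(R)$ for all $x \in R$, which is precisely the statement $bra \in J(R)$.

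The main point to get right is the identification $J(R/I) = J(R)/I$ when $I \subseteq J(R)$, together with the elementary fact that an element of $R$ is in $J(R)$ as soon as its image in $R/I$ lies in $J(R/I)$, given $I \subseteq J(R)$; this is a standard property of the Jacobson radical and is exactly the tool used at the end of Theorem~\ref{quotient}'s proof. Unlike Theorem~\ref{quotient}, no nilpotency is needed here: the containment $I \subseteq J(R)$ does all the work, so the argument is actually shorter. I do not anticipate a serious obstacle; the only thing to be careful about is not to assume $I$ is nilpotent or that $R/I$ being $J$-reflexive lets us conclude $bRa = 0$ — we genuinely only get containment in $J(R)$, which is all that is claimed.

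\begin{proof}
Set $\overline{R} = R/I$ and write $\overline{x} = x + I$ for $x \in R$. Since $I \subseteq J(R)$, we have $J(\overline{R}) = J(R)/I$, and an element $y \in R$ satisfies $y \in J(R)$ if and only if $\overline{y} \in J(\overline{R})$.

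Assume $aRb = 0$ for $a, b \in R$. Then $\overline{a}\,\overline{r}\,\overline{b} = \overline{0}$ for all $r \in R$, so $\overline{a}\,\overline{R}\,\overline{b} = \overline{0}$. As $\overline{R}$ is $J$-reflexive, $\overline{b}\,\overline{R}\,\overline{a} \subseteq J(\overline{R})$. Hence for every $r \in R$ we have $\overline{bra} \in J(\overline{R})$, and by the remark above $bra \in J(R)$. Therefore $bRa \subseteq J(R)$, and $R$ is $J$-reflexive.
\end{proof}
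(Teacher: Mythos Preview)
Your proof is correct and follows essentially the same route as the paper: pass to $\overline{R}=R/I$, use $J$-reflexivity there to get $\overline{bRa}\subseteq J(\overline{R})$, then lift back to $bRa\subseteq J(R)$ using $I\subseteq J(R)$. The only cosmetic difference is that you invoke the standard identification $J(R/I)=J(R)/I$ directly, whereas the paper unpacks this via the quasi-regularity characterization (showing $1-brax$ is a unit in $R$ from its invertibility modulo $I$); the content is identical.
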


\begin{proof}
Let $\overline R=R/I$ and for $\overline a=a+I\in R/I$.
Assume that $aRb=0$ for $a,b\in R$. So $\overline{aRb}=\overline 0
$. Since $\overline R$ is $J$-reflexive, $\overline{bRa}\subseteq
J(\overline R)$ and $\overline{bra}\in J(\overline R)$ for any
$\overline r\in \overline R$. Thus, for all $\overline x\in
\overline R$ we have $\overline 1-(\overline{bra})\overline x\in
U(\overline R)$. Then, there exists $\overline s\in \overline R$
such that $(\overline 1-(\overline{bra})\overline x)\overline
s=\overline 1$. Hence, $1-(1-brax)s\in I$. As $I$ is contained $J(R)$,
$(1-brax)s\in U(R)$. This implies that $bra\in J(R)$ and so
$bRa\subseteq J(R)$, as desired.
\end{proof}

\begin{prop}
Let $R$ be a ring and $I$ a reflexive ideal of $R$. Then $R/I$ is
$J$-reflexive.
\end{prop}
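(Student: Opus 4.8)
The plan is to show something slightly stronger than asked, namely that the quotient $R/I$ is an honest \emph{reflexive} ring; since reflexive rings were already observed to be $J$-reflexive (and in any case $\overline{0}\subseteq J(R/I)$ trivially), this finishes. Write $\overline{R}=R/I$ and $\overline{x}=x+I$ for $x\in R$. The first step is a bookkeeping observation: for $a,b\in R$ one has $\overline{a}\,\overline{R}\,\overline{b}=\overline{aRb}$, and $\overline{aRb}=\overline{0}$ in $\overline{R}$ precisely when $aRb\subseteq I$. This translates annihilation statements in $\overline{R}$ into containment statements in $I$, where the reflexivity hypothesis lives.

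Next I would carry out the main implication. Suppose $\overline{a}\,\overline{R}\,\overline{b}=\overline{0}$ for some $\overline{a},\overline{b}\in\overline{R}$; by the observation above this means $aRb\subseteq I$. Since $I$ is a reflexive ideal of $R$, we get $bRa\subseteq I$, and translating back, $\overline{b}\,\overline{R}\,\overline{a}=\overline{bRa}=\overline{0}$. Hence $\overline{0}$ is a reflexive ideal of $\overline{R}$, i.e., $R/I$ is reflexive, and therefore $J$-reflexive (indeed $\overline{b}\,\overline{R}\,\overline{a}=\overline{0}\subseteq J(\overline{R})$).

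I do not expect a genuine obstacle: the statement is essentially the definition of a reflexive ideal read off in the quotient ring, combined with the already-recorded fact from Section~2 that reflexive rings are $J$-reflexive. The only point worth a moment's care is the elementary identification $\overline{a}\,\overline{R}\,\overline{b}=\overline{aRb}$ together with the fact that passing to $R/I$ is a surjective ring homomorphism, so that products and the radical behave as expected; once that is in place the argument is immediate, and in particular we never need more than the trivial inclusion $\overline{0}\subseteq J(\overline{R})$ to conclude.
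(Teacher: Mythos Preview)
Your proof is correct and follows essentially the same route as the paper: translate $\overline{a}\,\overline{R}\,\overline{b}=\overline{0}$ to $aRb\subseteq I$, apply reflexivity of $I$ to get $bRa\subseteq I$, and conclude $\overline{bRa}=\overline{0}\subseteq J(\overline{R})$. Your explicit remark that this in fact shows $R/I$ is reflexive (not merely $J$-reflexive) is a harmless and accurate sharpening the paper leaves implicit.
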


\begin{proof}
Let $\overline R=R/I$ and for $\overline a=a+I\in R/I$. Suppose
that $\overline {aRb}=\overline 0$ for $\overline{a},\overline{b}\in \overline{R}$.
Then $aRb\subseteq I$. Since
$I$ is a reflexive ideal, we have $bRa\subseteq I$. Hence,
$\overline{bRa}=\overline 0\in J(\overline R)$.
\end{proof}

\begin{thm}\label{subdirect}
Every subdirect product of $J$-reflexive ring is $J$-reflexive.
\end{thm}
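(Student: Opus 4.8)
The plan is to work directly from the definition of a subdirect product. Recall that a ring $R$ is a subdirect product of a family $\{R_i\}_{i\in\Lambda}$ of rings if there is an embedding $\varphi\colon R\hookrightarrow \prod_{i\in\Lambda} R_i$ such that each composite $\pi_i\circ\varphi\colon R\to R_i$ (with $\pi_i$ the canonical projection) is surjective. Equivalently, there is a family of ideals $\{I_i\}_{i\in\Lambda}$ of $R$ with $\bigcap_{i\in\Lambda} I_i = 0$ and each $R/I_i\cong R_i$ being $J$-reflexive. I would adopt this second formulation, since it lets me stay inside $R$ and avoid bookkeeping with the product ring.

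First I would fix $a,b\in R$ with $aRb=0$, and fix an arbitrary $r\in R$; the goal is to show $bra\in J(R)$. For each $i\in\Lambda$, write $\overline{\phantom{x}}$ for the image in $R/I_i$: then $\overline a\,(R/I_i)\,\overline b = \overline{aRb} = \overline 0$, so by $J$-reflexivity of $R/I_i$ we get $\overline{bra}\in J(R/I_i)$. The key passage is then: $bra$ lies in $J(R)$ if and only if its image lies in $J(R/I_i)$ for every $i$. One direction is immediate since $J(R)+I_i/I_i \subseteq J(R/I_i)$; for the converse I would use the characterization of $J(R)$ as the set of elements $x$ such that $1-yxz\in U(R)$ for all $y,z\in R$ (equivalently $1-xz$ is right invertible for all $z$). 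So suppose $\overline{bra}\in J(R/I_i)$ for all $i$; fix $s\in R$; then for each $i$, $\overline 1 - \overline{bra}\,\overline s\in U(R/I_i)$, and I must produce a two-sided inverse of $1-bras$ in $R$. Here is where the intersection condition does the work: the element $1-bras$ is a unit modulo each $I_i$, hence a unit modulo $\bigcap_i I_i = 0$; but one must argue this carefully, because a product of infinitely many rings can have units in each coordinate whose assembled inverse need not come from $R$ — this is precisely the subtlety that "subdirect" (rather than "direct") handles.

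To handle that subtlety cleanly I would instead argue via quasi-regularity: $bra\in J(R)$ iff $bra$ is \emph{left quasi-regular in a strong sense}, namely $brar'$ is left quasi-regular for every $r'\in R$ (equivalently $J(R)=\{x : Rx \text{ consists of left quasi-regular elements}\}$). Fix $r'\in R$ and set $x = brar'$. For each $i$, $\overline x\in J(R/I_i)$, so there is $\overline{y_i}\in R/I_i$ with $\overline{y_i}+\overline x - \overline{y_i}\,\overline x = \overline 0$, i.e. $y_i + x - y_i x\in I_i$. I want a single $y\in R$ with $y + x - yx = 0$. Take $y\in R$ mapping (via $R\to R/I_j$ for a suitably chosen $j$) appropriately — but again choosing a coherent $y$ across all $i$ is not automatic. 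The correct move is to observe that $x$ being in $J(R/I_i)$ for all $i$ forces $x\in \bigcap_i (\text{preimage of }J(R/I_i))$, and then invoke the standard fact that $\bigcap_i \rho^{-1}_i(J(R/I_i)) = J(R)$ when $\bigcap_i I_i=0$: indeed $J(R)\subseteq\bigcap_i\rho_i^{-1}(J(R/I_i))$ always, and if $x$ is not in $J(R)$ then $x$ acts nontrivially on some simple left $R$-module $M$, and since $\bigcap_i I_i=0$ some $I_i$ annihilates $M$, making $M$ a simple $R/I_i$-module on which $x$ acts nontrivially, contradicting $\overline x\in J(R/I_i)$.

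So the skeleton is: (i) translate $R$ a subdirect product into the ideal condition $\bigcap_i I_i=0$ with each $R/I_i$ $J$-reflexive; (ii) from $aRb=0$ deduce $\overline{bRa}\subseteq J(R/I_i)$ for every $i$; (iii) apply the module-theoretic identity $\bigcap_i\rho_i^{-1}(J(R/I_i))=J(R)$ to conclude $bRa\subseteq J(R)$. The main obstacle, as flagged, is step (iii) — guaranteeing that "in $J$ modulo every $I_i$" upgrades to "in $J$" — and the clean way past it is the simple-module argument using $\bigcap_i I_i = 0$ rather than any attempt to assemble inverses coordinatewise; once that identity is in hand the theorem is immediate.
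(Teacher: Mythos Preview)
Your plan correctly isolates step (iii) as the crux, but the justification you offer for it is not valid, and in fact the identity $J(R)=\bigcap_{i}\rho_i^{-1}\bigl(J(R/I_i)\bigr)$ can fail for infinite families even when $\bigcap_i I_i=0$. The sentence ``since $\bigcap_i I_i=0$ some $I_i$ annihilates $M$'' is a non sequitur: the annihilator $P=\mathrm{Ann}_R(M)$ of a simple module is a primitive (hence prime) ideal, and from $I_1\cdots I_n\subseteq\bigcap_{j=1}^{n} I_j=0\subseteq P$ one does obtain $I_j\subseteq P$ for some $j$ when the family is \emph{finite}; nothing of the sort follows for an infinite intersection. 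A concrete counterexample to your lemma: let $B=k[x]_{(x)}$ and $R=B[y]$, and set $I_n=(y^n)$. Then $\bigcap_n I_n=0$; each $R/I_n=B[y]/(y^n)$ is local with $J(R/I_n)=(x,y)/(y^n)$, so $\rho_n^{-1}\bigl(J(R/I_n)\bigr)=(x,y)$ for every $n$ and hence $\bigcap_n\rho_n^{-1}\bigl(J(R/I_n)\bigr)=(x,y)$; but $J(R)=0$ because $B$ is a domain, so the only units of $B[y]$ lie in $B^{\ast}$ and $1-xy$ is not among them. Thus $x$ lies in $J(R/I_n)$ for every $n$ yet $x\notin J(R)$. (This does not contradict the theorem---both $R$ and all $R/I_n$ are commutative, hence $J$-reflexive---it only shows that your proposed bridge (iii) collapses in the infinite case.)

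The paper, by contrast, treats only the two-factor case $I\cap K=0$ and bypasses the issue with an elementary element-chase: from right inverses $y,z$ of $1-brax$ modulo $I$ and modulo $K$ one multiplies $\bigl(1-(1-brax)y\bigr)\bigl(1-(1-brax)z\bigr)\in IK\subseteq I\cap K=0$ and expands to obtain an honest right inverse $t=y+z-y(1-brax)z$ of $1-brax$ in $R$. For finitely many factors your simple-module argument \emph{does} go through (via the primeness remark above) and gives a clean conceptual alternative to the paper's computation; the over-reach is only in extending it to arbitrary index sets $\Lambda$, where the key lemma is simply false.
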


\begin{proof}
Let $R$ be a ring, $I,K$ ideals of $R$ and $R$ a subdirect product
of $R/I$ and $R/K$. Assume that $R/I$ and $R/K$ are $J$-reflexive.
Let $aRb=0$ for $a,b\in R$. Then $\overline{aRb}=\overline{0}$ in
$R/I$ and $R/K$. Since $R/I$ and $R/K$ are $J$-reflexive,
$\overline{bRa}\subseteq J(R/I)$ and $\overline{bRa}\subseteq
J(R/K)$. Then for each $x\in R$ we have $\overline{1-brax}\in
U(R/I)$ and $\overline{1-brax}\in U(R/K)$.
 Hence, there exist $y\in R/I$ and $z\in R/K$ such that
 $\overline{(1-brax)y}=\overline{1}\in R/I$ and $\overline{(1-brax)z}=\overline{1}\in R/K$.
 So $1-(1-brax)y\in I$ and $1-(1-brax)z\in K$. If we multiply the last two elements,
 we have $(1-(1-brax)y)(1-(1-brax)z)\in IK\subseteq I\cap K=0$. Thus, $1-(1-brax)t=0$
 and $(1-brax)t=1$. This implies that $bRa\subseteq J(R)$.
\end{proof}

\begin{cor}
Let $I$ and $K$ be ideals of a ring $R$. If $R/I$ and $R/K$ are $J$-reflexive,
then $R/I\cap K$ is $J$-reflexive.
\end{cor}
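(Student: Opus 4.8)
The plan is to realize $R/(I\cap K)$ as a subdirect product of $R/I$ and $R/K$ and then invoke Theorem \ref{subdirect} directly. First I would set $\overline{R} = R/(I\cap K)$ and consider the natural ring homomorphism $\varphi\colon \overline{R}\to R/I\times R/K$ sending $r+(I\cap K)$ to $(r+I,\,r+K)$. The kernel of the composite $R\to R/I\times R/K$ is exactly $I\cap K$, so $\varphi$ is injective; moreover the two coordinate projections $\overline{R}\to R/I$ and $\overline{R}\to R/K$ are surjective with kernels $I/(I\cap K)$ and $K/(I\cap K)$ respectively, whose intersection in $\overline{R}$ is zero. Thus $\overline{R}$ is (isomorphic to) a subdirect product of the two rings $R/I$ and $R/K$.

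Next I would note that $R/I$ and $R/K$ are $J$-reflexive by hypothesis, so Theorem \ref{subdirect} applies verbatim to the ideals $I/(I\cap K)$ and $K/(I\cap K)$ of $\overline{R}$, giving that $\overline{R} = R/(I\cap K)$ is $J$-reflexive. Strictly speaking Theorem \ref{subdirect} is stated for a ring that \emph{is} a subdirect product, so the only thing to check is that being a subdirect product is preserved under the isomorphism $\varphi$, which is immediate since $J$-reflexivity is clearly an isomorphism-invariant property (it is defined purely in terms of the multiplication and the Jacobson radical, and ring isomorphisms carry $J(R)$ onto $J(R')$).

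I do not anticipate any real obstacle here; the statement is essentially a restatement of Theorem \ref{subdirect} in the standard ``subdirect product over intersection of ideals'' form. If one prefers to avoid even mentioning the isomorphism, an alternative is to repeat the short computation from the proof of Theorem \ref{subdirect}: assuming $aRb = 0$ in $R$, reduce modulo $I$ and modulo $K$ to get $\overline{bra}\in J(R/I)$ and $\overline{bra}\in J(R/K)$ for all $r\in R$, lift the resulting unit equations to $1-(1-brax)y\in I$ and $1-(1-brax)z\in K$, multiply them, and use $IK\subseteq I\cap K$ to conclude that $1-brax$ is a unit modulo $I\cap K$, hence $bRa\subseteq J(R/(I\cap K))$. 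Either way the argument is two or three lines; I would present the first (cleaner) version and leave the computational variant implicit.
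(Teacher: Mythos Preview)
Your proposal is correct and matches the paper's own proof essentially verbatim: the paper also exhibits $R/(I\cap K)$ as a subdirect product of $R/I$ and $R/K$ via the canonical surjections with intersecting kernels zero, and then invokes Theorem~\ref{subdirect}.
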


\begin{proof}
Let $\alpha:R/(I\cap K)\rightarrow R/I$ and $\beta:R/(I\cap K)\rightarrow R/K$
where $\alpha(r+(I\cap K))=r+I$ and $\beta(r+(I\cap K))=r+K$. It can be shown
that $\alpha$ and $\beta$ surjective ring homomorphisms and ker$\alpha\cap$ker$\beta=0$.
Hence $R/(I\cap K)$ is subdirect product of $R/I$ and $R/K$. Therefore, $R/(I\cap K)$ by
Theorem \ref{subdirect}.
\end{proof}

\begin{cor}
Let $R$ be a ring and $I,K$ of ideals of $R$. If $R/I$ and $R/K$ are $J$-reflexive,
then $R/IK$ is $J$-reflexive.
\end{cor}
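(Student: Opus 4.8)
The plan is to reduce this statement to the immediately preceding corollary (which handles $R/(I\cap K)$) together with Theorem \ref{quotient} (which handles quotients by nilpotent ideals). The bridge between the two is the elementary containment $IK\subseteq I\cap K$, so that there is a natural surjective ring homomorphism $\pi:R/IK\twoheadrightarrow R/(I\cap K)$ whose kernel is $(I\cap K)/IK$.

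First I would record the key inclusion $(I\cap K)^2\subseteq IK$: if $x,y\in I\cap K$ then $x\in I$ and $y\in K$, so $xy\in IK$. Consequently the ideal $\overline{L}:=(I\cap K)/IK$ of the ring $\overline{S}:=R/IK$ satisfies $\overline{L}^2=\overline 0$, i.e.\ $\overline{L}$ is a nilpotent ideal of $\overline{S}$. By the Third Isomorphism Theorem, $\overline{S}/\overline{L}\cong R/(I\cap K)$.

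Next I would invoke the previous corollary: since $R/I$ and $R/K$ are $J$-reflexive, $R/(I\cap K)$ is $J$-reflexive, hence so is $\overline{S}/\overline{L}$. Now Theorem \ref{quotient}, applied to the ring $\overline{S}=R/IK$ and its nilpotent ideal $\overline{L}=(I\cap K)/IK$, says that $\overline{S}$ is $J$-reflexive if and only if $\overline{S}/\overline{L}$ is $J$-reflexive. Since the latter holds, we conclude that $R/IK$ is $J$-reflexive, as desired.

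There is no real obstacle here; the only point requiring a moment's care is the verification that the kernel $(I\cap K)/IK$ is genuinely nilpotent in $R/IK$ (which is exactly the inclusion $(I\cap K)^2\subseteq IK$), and that one is invoking Theorem \ref{quotient} with the correct base ring $R/IK$ rather than $R$ itself. Everything else is a routine bookkeeping of the isomorphism theorems.
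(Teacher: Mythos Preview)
Your proposal is correct and follows exactly the same route as the paper's proof: use the previous corollary to get that $R/(I\cap K)$ is $J$-reflexive, identify it with $(R/IK)/\big((I\cap K)/IK\big)$ via the Third Isomorphism Theorem, observe that $(I\cap K)/IK$ squares to zero, and apply Theorem~\ref{quotient} to the ring $R/IK$. Your write-up is in fact more careful than the paper's, since you make explicit that Theorem~\ref{quotient} is being applied with base ring $R/IK$ and you spell out why $(I\cap K)^2\subseteq IK$.
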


\begin{proof}
Assume that $R/I$ and $R/K$ are $J$-reflexive. Since $$R/I\cap K
\cong (R/IK)/(I\cap K/ IK)$$ and $(I\cap K/IK)^2=0$, we complete
the proof by Theorem \ref{quotient}.
\end{proof}

\section{Extensions of $J$-reflexive rings}

In this section we show that several extensions (Dorroh extension,
upper triangular matrix ring, Laurent polynomial ring, trivial
extension etc.) of a $J$-reflexive ring are $J$-reflexive. In
particular, it is proved that the $J$-reflexive condition is
Morita invariant.

 Two rings $R$ and $S$ are said to be \textit{Morita equivalent} if the categories of
 all right $R$-modules and  all right $S$-modules are equivalent. Properties
  shared between equivalent rings are called \textit{Morita invariant properties}.
  $\mathcal{P}$ is Morita invariant if and only if whenever a ring $R$ satisfies
  $\mathcal P$, then so does $eRe$ for every full idempotent $e$ and so does every
  matrix ring $M_n(R)$ for every positive integer $n$.

 Next result shows that the property of $J$-reflexivitiy is Morita invariant.

\begin{thm}\label{morita_inv.} Let $R$ be a ring. Then we have the following.
\begin{enumerate}
\item If $R$ is $J$-reflexive, then $eRe$ is $J$-reflexive for all
idempotent $e\in R$.
\item $R$ is a $J$-reflexive ring if and only if $M_n(R)$ is
$J$-reflexive for any positive integer $n$.
\end{enumerate}
\end{thm}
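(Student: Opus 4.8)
The plan is to deduce both parts from the ideal-theoretic characterization of $J$-reflexivity established above (in particular items (5) and (6)) together with three standard structural facts: for any idempotent $e\in R$ one has $J(eRe)=eJ(R)e$; the ideals of $M_n(R)$ are precisely the sets $M_n(A)$ with $A$ an ideal of $R$; and $J(M_n(R))=M_n(J(R))$.

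For (1), fix an idempotent $e\in R$ and let $a,b\in eRe$ with $a(eRe)b=0$. The first step is the observation that, since $ae=a$ and $eb=b$, we have $a(ere)b=arb$ for every $r\in R$, so $a(eRe)b$ and $aRb$ coincide as subsets of $R$; in particular $aRb=0$. As $R$ is $J$-reflexive, $bRa\subseteq J(R)$, and for each $r\in R$ the element $bra$ equals $e(bra)e$ and hence lies in $eJ(R)e=J(eRe)$. Therefore $b(eRe)a=bRa\subseteq J(eRe)$, which is exactly the $J$-reflexivity of $eRe$.

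For (2), one direction is immediate from (1): the matrix unit $e_{11}$ is an idempotent of $M_n(R)$ with $e_{11}M_n(R)e_{11}\cong R$, so if $M_n(R)$ is $J$-reflexive then so is $R$. For the converse, assume $R$ is $J$-reflexive and let $I,K$ be ideals of $M_n(R)$ with $IK=0$; by item (6) of the characterization it suffices to prove $KI\subseteq J(M_n(R))$. Writing $I=M_n(A)$ and $K=M_n(B)$ for ideals $A,B$ of $R$ and using $M_n(A)M_n(B)=M_n(AB)$, the hypothesis forces $AB=0$, hence $BA\subseteq J(R)$ because $R$ is $J$-reflexive, and so $KI=M_n(BA)\subseteq M_n(J(R))=J(M_n(R))$, as required.

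There is no deep obstacle here: once the three background identities are available the argument is essentially bookkeeping. The one step that deserves care is the identification $a(eRe)b=aRb$ in part (1), since it is precisely what turns the corner ring's hypothesis $a(eRe)b=0$ into the hypothesis $aRb=0$ for $R$, and it relies only on $a$ and $b$ being fixed by multiplication by $e$ on the appropriate sides; I would write this small computation out explicitly rather than leaving it implicit.
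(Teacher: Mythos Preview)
Your proof is correct and follows essentially the same route as the paper's: part (1) is handled by reducing $a(eRe)b=0$ to $aRb=0$ via $ae=a$, $eb=b$, then using $bra=e(bra)e\in eJ(R)e=J(eRe)$; part (2) uses the ideal characterization together with the correspondence $I=M_n(A)$ for ideals of $M_n(R)$ and $J(M_n(R))=M_n(J(R))$, with the converse coming from $e_{11}M_n(R)e_{11}\cong R$, which is exactly the paper's appeal to (1). Your write-up is in fact more careful than the paper's in spelling out the identification $a(eRe)b=aRb$.
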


\begin{proof}
$(1)$ Assume that $R$ is a $J$-reflexive ring. Let $a,b\in eRe$
with $aeReb=0$. As $R$ is $J$-reflexive, $ebRae=ebRea\subseteq
J(eRe)=eJ(R)e$. This implies that $eRe$ is a $J$-reflexive ring.\\
$(2)$ Assume that $M_n(R)$ is $J$-reflexive ring. It is clear that
$R$ is $J$-reflexive by $(1)$. Conversely, suppose that $R$ is
$J$-reflexive and $I,K$ are an ideals of $M_n(R)$ such that
$IK=0$. Then, there exist $I_1,K_1$ ideals of $R$ such that
$I=M_n(I_1)$ and $K=M_n(K_1)$. So
$0=IK=M_n(I_1)M_n(K_1)=M_n(I_1K_1)$. Thus, $I_1K_1=0$. Since $R$
is $J$-reflexive, $K_1I_1\subseteq J(R)$. This implies that
$KI=M_n(K_1)M_n(I_1)=M_n(K_1I_1)\subseteq J(M_n(R))=M_n(J(R))$. This
completes the proof.
\end{proof}

\begin{cor}
    Let $M$ be a finitely generated projective modules over a $J$-reflexive ring $R$.
    Then End$_R(M)$ is $J$-reflexive.
\end{cor}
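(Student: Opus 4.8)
The plan is to reduce the statement about $\mathrm{End}_R(M)$ for a finitely generated projective module $M$ to the two parts of Theorem~\ref{morita_inv.}. Since $M$ is finitely generated and projective, it is a direct summand of a free module $R^n$ for some positive integer $n$; that is, there is an idempotent $e \in M_n(R)$ with $M \cong e R^n$ as right $R$-modules. A standard fact is then $\mathrm{End}_R(M) \cong e M_n(R) e$, the corner ring of $M_n(R)$ at $e$. So the whole proof is a two-line chain of functorial reductions, and the content is entirely supplied by earlier results.

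First I would invoke part~(2) of Theorem~\ref{morita_inv.}: since $R$ is $J$-reflexive, $M_n(R)$ is $J$-reflexive. Then I would apply part~(1) of Theorem~\ref{morita_inv.} to the ring $M_n(R)$ with the idempotent $e$: since $M_n(R)$ is $J$-reflexive, the corner $e M_n(R) e$ is $J$-reflexive. Finally, transporting the $J$-reflexive property across the ring isomorphism $\mathrm{End}_R(M) \cong e M_n(R) e$ — which is routine, as $J$-reflexivity is visibly preserved by ring isomorphisms — gives that $\mathrm{End}_R(M)$ is $J$-reflexive.

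The only point that needs a word of justification is the isomorphism $\mathrm{End}_R(M) \cong e M_n(R) e$. Writing $M \oplus N \cong R^n$ with $e$ the projection onto $M$, every $R$-endomorphism of $M$ extends (by zero on $N$) to an endomorphism of $R^n$ lying in $e M_n(R) e$, and conversely each element of $e M_n(R) e$ restricts to an endomorphism of $M = e R^n$; these assignments are mutually inverse ring homomorphisms. This is the well-known description of the endomorphism ring of a finitely generated projective module, so I would simply cite it rather than reprove it.

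I do not anticipate a genuine obstacle here: the corollary is a direct packaging of Theorem~\ref{morita_inv.}(1) and~(2) together with the structure theory of finitely generated projective modules. If one wanted to be completely self-contained one could note that $e$ is automatically a full idempotent when $M$ is a generator, but fullness is not even needed for the argument as phrased, since part~(1) of Theorem~\ref{morita_inv.} is stated for \emph{all} idempotents $e$, not merely full ones. Thus the proof is short and the emphasis should be on clearly stating the two reductions in the correct order.
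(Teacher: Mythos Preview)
Your proposal is correct and is exactly the unpacking of the paper's one-line proof (``It is obvious by Theorem~\ref{morita_inv.}''): represent $\mathrm{End}_R(M)$ as a corner $eM_n(R)e$ of a matrix ring and apply parts~(2) and~(1) of Theorem~\ref{morita_inv.} in turn. Your observation that fullness of $e$ is unnecessary because part~(1) is stated for arbitrary idempotents is also on point.
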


\begin{proof}
    It is obvious by Theorem \ref{morita_inv.}.
\end{proof}

\begin{prop}
The following are equivalent for a ring $R$.
\begin{enumerate}
\item $R$ is $J$-reflexive.
\item $M=\left\{\left(%
\begin{array}{cccc}
    r &x_{12}&\cdots &x_{1n}  \\
    0 &r&\cdots &x_{2n} \\
    \vdots & \vdots&\vdots&\vdots \\
    0 &\cdots&\cdots&r \\
\end{array}%
\right):r\in R,~x_{ij}\in R\right\}$ is $J$-reflexive.
\end{enumerate}
\end{prop}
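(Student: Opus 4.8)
The plan is to exhibit $M$ as an extension of $R$ by a nilpotent ideal and then appeal directly to Theorem \ref{quotient}. Let $N$ denote the set of all matrices in $M$ whose common diagonal entry $r$ is $0$; equivalently, $N$ is the set of strictly upper triangular matrices lying in $M$. The first step is to check that $N$ is a two-sided ideal of $M$: it is precisely the kernel of the map $\pi\colon M\to R$ sending a matrix in $M$ to its (constant) diagonal entry $r$. Since an element of $M$ has the form $rI+U$ with $U$ strictly upper triangular, and $(rI+U)(sI+V)=rsI+(rV+sU+UV)$ with $rV+sU+UV$ again strictly upper triangular, the map $\pi$ is a surjective ring homomorphism; hence $N$ is an ideal and $M/N\cong R$.

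The second step is to note that $N$ is nilpotent: every element of $N$ is strictly upper triangular, so $N^{n}=0$. Thus $N$ is a nilpotent ideal of $M$ with $M/N\cong R$.

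With these two facts in hand, the proposition is immediate from Theorem \ref{quotient} applied to the ring $M$ and its nilpotent ideal $N$: $M$ is $J$-reflexive if and only if $M/N$ is $J$-reflexive, and $M/N\cong R$. This establishes the equivalence of $(1)$ and $(2)$.

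I do not expect a real obstacle here. The only points needing (routine) verification are that $\pi$ is multiplicative — which relies essentially on the diagonal entries of elements of $M$ being forced equal, so that the ``diagonal part'' behaves like the scalar ring $R$ rather than like $R^{n}$ — and that $N$ absorbs multiplication by arbitrary elements of $M$; both follow at once from the entrywise description of $M$. (For $n=1$ the statement is trivial, since then $M=R$.)
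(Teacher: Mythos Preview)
Your proposal is correct and follows essentially the same approach as the paper: the paper's proof also takes the ideal $I$ of strictly upper triangular matrices in $M$ and appeals directly to Theorem \ref{quotient}, relying on $I$ being nilpotent and $M/I\cong R$. Your write-up simply fills in the routine verifications (that $\pi$ is a surjective ring homomorphism and that $N^{n}=0$) which the paper leaves implicit.
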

\begin{proof}
$(1)\Leftrightarrow (2)$ Take $I=\left(%
\begin{array}{cccc}
0&x_{12}&\cdots &x_{1n}\\
0 &0&\cdots &x_{2n}\\
\vdots & \vdots&\vdots&\vdots \\
0 &\cdots&\cdots&0\\
\end{array}%
\right)$. The proof is clear by Theorem \ref{quotient}.
\end{proof}

Recall that the \textit{trivial extension} of $R$ by an $R$-module
$M$ is the ring denoted by $R \propto M$ whose underlying additive
group is $R\oplus M$ with multiplication given by $(r, m)(r ' , m'
) = (rr' , rm'+mr' )$. The ring $R\propto M$ is isomorphic to
$S=\left\{\left(%
\begin{array}{cc}
  x & y \\
  0 & x \\
\end{array}%
\right) :x\in R,~y\in M\right\}$ under the usual matrix
operations.

\begin{prop} The following are equivalent for a ring $R$.
\begin{enumerate}
\item The trivial extension $R\propto R$ of the ring $R$ is
$J$-reflexive.
\item $R$ is a $J$-reflexive ring.
\end{enumerate}
\end{prop}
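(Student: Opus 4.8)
The plan is to exhibit $R$ as a quotient of $R \propto R$ by a square-zero ideal and then invoke Theorem~\ref{quotient}. Identify $R \propto R$ with the ring $S = \left\{ \left( \begin{smallmatrix} x & y \\ 0 & x \end{smallmatrix} \right) : x, y \in R \right\}$ described above, and put
$$I = \left\{ \left( \begin{array}{cc} 0 & y \\ 0 & 0 \end{array} \right) : y \in R \right\}.$$
First I would verify that $I$ is a two-sided ideal of $S$: since the multiplication in $S$ is ordinary matrix multiplication, left or right multiplying an element of $I$ by an arbitrary element of $S$ returns an element of $I$. Next, the product of any two elements of $I$ is the zero matrix, so $I^2 = 0$ and hence $I$ is a nilpotent ideal of $S$.

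Then I would note that $\pi \colon S \to R$, $\left( \begin{smallmatrix} x & y \\ 0 & x \end{smallmatrix} \right) \mapsto x$, is a surjective ring homomorphism with $\ker \pi = I$, so $S/I \cong R$. Applying Theorem~\ref{quotient} to the nilpotent ideal $I$ of $S$ yields that $S$ is $J$-reflexive if and only if $S/I \cong R$ is $J$-reflexive; since $S \cong R \propto R$, this is precisely the claimed equivalence.

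I do not expect a genuine obstacle here: the argument mirrors the one used for the ring $M$ in the preceding proposition, with the strictly upper triangular corner $I$ playing the role of the square-zero ideal, so no computation of $J(S)$ is even needed. The only mild point to check is that $I$ is stable under multiplication by \emph{all} of $S$ — including the ``off-diagonal'' elements — rather than just by the diagonal copy of $R$, but this is immediate from the defining formula $(r,m)(r',m') = (rr', rm' + mr')$ of the trivial extension.
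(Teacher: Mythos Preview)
Your argument is correct. Identifying $R\propto R$ with $S$, the set $I$ of strictly upper triangular elements is a two-sided square-zero ideal, $S/I\cong R$ via the diagonal projection, and Theorem~\ref{quotient} then gives the equivalence immediately.

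This is, however, not the route the paper takes for this particular proposition. The paper argues directly: for $(1)\Rightarrow(2)$ it embeds $a,b\in R$ as diagonal elements of $R\propto R$ and reads off $bRa\subseteq J(R)$ from $B(R\propto R)A\subseteq J(R\propto R)$; for $(2)\Rightarrow(1)$ it computes $A(R\propto R)B$ entrywise, extracts $aRb=0$ on the diagonal, and then uses the explicit description $J(R\propto R)=\left(\begin{smallmatrix}J(R)&R\\0&J(R)\end{smallmatrix}\right)$ to conclude. Your approach is shorter and more conceptual---it is exactly the method the paper uses for the ring $M$ in the preceding proposition, and as you note it spares you from having to identify $J(R\propto R)$ at all. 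The paper's direct argument, on the other hand, is self-contained and does not rely on Theorem~\ref{quotient}, whose proof is somewhat more involved than the elementary matrix computation it replaces here.
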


\begin{proof}
$(1)\Rightarrow (2)$ Assume that $R\propto R$ is $J$-reflexive.
Let $aRb=0$ for $a,b\in R$. Then, for $A=\left(%
\begin{array}{cc}
  a & 0 \\
  0 & a \\
\end{array}%
\right), ~B=\left(%
\begin{array}{cc}
  b & 0 \\
  0 & b \\
\end{array}%
\right)\in R\propto R$, we have $A(R\propto R)B=\left(%
\begin{array}{cc}
  aRb & aRb \\
  0 & aRb \\
\end{array}%
\right)=\left(%
\begin{array}{cc}
  0 & 0 \\
  0 & 0 \\
\end{array}%
\right)$. As $R\propto R$ is $J$-reflexive, $B(R\propto
R)A\subseteq J(R\propto R)$. Hence, $bRa\subseteq J(R)$.\\
$(2)\Rightarrow (1)$ Suppose that $R$ is $J$-reflexive. Let
$A(R\propto R)B=0$ for $A=\left(%
\begin{array}{cc}
  a & x \\
  0 & a \\
\end{array}%
\right), ~B=\left(%
\begin{array}{cc}
  b & y \\
  0 & b \\
\end{array}%
\right)\in R\propto R$. Then for any $M=\left(%
\begin{array}{cc}
  s & t \\
  0 & s \\
\end{array}%
\right)\in R\propto R$, we have $AMB=\left(%
\begin{array}{cc}
  asb & asy+atb+xsb \\
  0 & asb \\
\end{array}%
\right)=\left(%
\begin{array}{cc}
  0 & 0 \\
  0 & 0 \\
\end{array}%
\right)$. Since $R$ is $J$-reflexive and $aRb=0$, we conclude that
$bsa\in J(R)$ for any $s\in R$. Note that $J(R\propto R)=\left(%
\begin{array}{cc}
  J(R) & R \\
  0 & J(R) \\
\end{array}%
\right)$. Hence, $B(R\propto R)A\subseteq J(R\propto R)$, as
asserted.
\end{proof}

\begin{prop}\label{product}
Let $\{R_i\}_{i\in \mathcal I}$ indexed set of the ring $R_i$.
Then $R_i$ is $J$-reflexive for all $i\in \mathcal I$ if and only
if $\prod_{i\in \mathcal I}R_i$ is $J$-reflexive.
\end{prop}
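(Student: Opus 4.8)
The plan is to exploit the fact that both the ring operations and the Jacobson radical of a direct product are computed coordinatewise; concretely I would use the standard identity $J\left(\prod_{i\in\mathcal I}R_i\right)=\prod_{i\in\mathcal I}J(R_i)$. Write $R=\prod_{i\in\mathcal I}R_i$ and denote a typical element by $a=(a_i)_{i\in\mathcal I}$. The whole argument then reduces to checking that the annihilator condition $aRb=0$ passes between $R$ and its factors.

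For the forward implication, I would assume every $R_i$ is $J$-reflexive and suppose $aRb=0$ for $a=(a_i)$, $b=(b_i)\in R$. Fix $i\in\mathcal I$ and an arbitrary $r_i\in R_i$; letting $r\in R$ be the element whose $i$-th coordinate is $r_i$, the $i$-th coordinate of $arb$ is $a_i r_i b_i$, hence $a_iR_ib_i=0$. By $J$-reflexivity of $R_i$ this gives $b_iR_ia_i\subseteq J(R_i)$, and since $i$ was arbitrary, $bRa\subseteq\prod_{i\in\mathcal I}J(R_i)=J(R)$. Thus $R$ is $J$-reflexive.

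For the converse, I would assume $R$ is $J$-reflexive and fix $j\in\mathcal I$. Given $a_j,b_j\in R_j$ with $a_jR_jb_j=0$, form $a,b\in R$ whose $j$-th coordinates are $a_j,b_j$ and whose remaining coordinates are $0$. Then for every $r\in R$ each coordinate of $arb$ vanishes (the $j$-th because $a_jr_jb_j=0$, the others trivially), so $aRb=0$. Now $J$-reflexivity of $R$ yields $bRa\subseteq J(R)=\prod_{i\in\mathcal I}J(R_i)$, and reading off the $j$-th coordinate gives $b_jR_ja_j\subseteq J(R_j)$. Hence $R_j$ is $J$-reflexive.

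There is essentially no real obstacle here: the only ingredient needing care is the equality $J\left(\prod_{i}R_i\right)=\prod_{i}J(R_i)$, which I would simply cite, together with the routine observation that $aRb=0$ in a product is equivalent to $a_iR_ib_i=0$ in each factor. The coordinatewise embedding used in the converse direction handles the fact that $\mathcal I$ may be infinite without any additional work.
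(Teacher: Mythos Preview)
Your proof is correct and follows the same coordinatewise strategy as the paper, hinging on the identity $J\bigl(\prod_i R_i\bigr)=\prod_i J(R_i)$. The only difference in execution is that the paper argues both directions via the ideal-theoretic characterization of $J$-reflexivity (condition~(6) of Theorem~2.2): for the forward direction it takes ideals $M=\prod_i M_i$ and $K=\prod_i K_i$ of the product with $MK=0$, deduces $M_iK_i=0$ in each factor, and concludes $KM\subseteq\prod_i J(R_i)$; for the converse it embeds ideals $M_\phi,K_\phi$ of a single factor $R_\phi$ as ideals of the product supported only in the $\phi$-coordinate. Your elementwise version is arguably cleaner, since the paper's forward direction tacitly assumes that an arbitrary ideal of $\prod_i R_i$ has the form $\prod_i M_i$, which is not true in general for infinite $\mathcal I$; working directly with elements $a=(a_i)$, $b=(b_i)$ and the original definition, as you do, avoids this issue altogether.
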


\begin{proof}
$\Rightarrow$ Let $\prod_{i\in \mathcal I}M_iK_i=0$ for ideals
$\prod_{i\in \mathcal I}M_i,\prod_{i\in \mathcal I}K_i$ of
$\prod_{i\in \mathcal I}R_i$. Then $\prod_{i\in \mathcal
I}M_iK_i=0$. Therefore, $M_iK_i=0$ for all $i\in \mathcal I$.
Since $R_i$ is $J$-reflexive, $K_iM_i\subseteq J(R_i)$ for all
$i\in \mathcal I$. So $\prod_{i\in \mathcal I}K_i\prod_{i\in
\mathcal I}M_i=\prod_{i\in \mathcal I}K_iM_i\subseteq
J(\prod_{i\in \mathcal I}R_i)=\prod_{i\in \mathcal I}J(R_i)$.\\
$\Leftarrow$ Assume that $M_{\phi} K_{\phi}=0$ for ideals
$M_{\phi}, K_{\phi}$ of $R_{\phi}$. Choose $M=(M_{\phi})_{\phi\in
\mathcal I}$ and $K=(K_{\phi})_{\phi\in \mathcal I}$ as only
$\phi$ components are nonzero ideal. So $M$ and $K$ are ideals of
$\prod_{i\in \mathcal I}R_i$. Also we have $MK=0$. As $\prod_{i\in
\mathcal I}R_i$ is $J$-reflexive, $KM\subseteq J(\prod_{i\in
\mathcal I}R_i)$. Thus, $K_{\phi}M_{\phi}\subseteq J(R_{\phi})$.
\end{proof}

\begin{prop}
The following are equivalent for a ring $R$.
\begin{enumerate}
\item $R$ is a $J$-reflexive ring.
\item $T_n(R)$ is $J$-reflexive for any $n\in \Bbb Z^+$.
\end{enumerate}
\end{prop}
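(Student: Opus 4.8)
The plan is to prove $(1)\Leftrightarrow(2)$ by exhibiting $T_n(R)$ as a quotient of the ring $M$ considered in the preceding proposition by a nilpotent ideal, and then invoking Theorem \ref{quotient}. Concretely, $T_n(R)$ itself has the nilpotent ideal $I$ consisting of strictly upper triangular matrices, with $I^n=0$, and $T_n(R)/I\cong R^n$ (the diagonal). So by Theorem \ref{quotient}, $T_n(R)$ is $J$-reflexive if and only if $R^n$ is $J$-reflexive, and by Proposition \ref{product} (applied to the finite index set $\{1,\dots,n\}$) $R^n$ is $J$-reflexive if and only if $R$ is $J$-reflexive. This chains together to give the equivalence directly.

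First I would set $I=\{(x_{ij})\in T_n(R): x_{ij}=0 \text{ for } i\ge j\}$, observe that $I$ is a two-sided ideal of $T_n(R)$, and check that $I^n=0$ (a product of $n$ strictly upper triangular $n\times n$ matrices vanishes). Next I would identify $T_n(R)/I$ with $\prod_{i=1}^n R$ via the map sending an upper triangular matrix to its diagonal $(r_{11},\dots,r_{nn})$; this is a surjective ring homomorphism with kernel exactly $I$. Then I would apply Theorem \ref{quotient} to conclude $T_n(R)$ is $J$-reflexive iff $T_n(R)/I\cong\prod_{i=1}^n R$ is $J$-reflexive, and finally apply Proposition \ref{product} to get this is equivalent to $R$ being $J$-reflexive.

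I do not anticipate a serious obstacle here; the argument is a short composition of two results already proved in the paper. The only mildly delicate point is making sure the hypotheses of Theorem \ref{quotient} are literally met — namely that $I$ is genuinely a nilpotent \emph{ideal} of $T_n(R)$ (not merely a nil ideal), which is immediate from $I^n=0$. One should also note that Proposition \ref{product} is stated for arbitrary indexed families, so the finite case $R^n$ is covered without any extra work. An alternative, if one wished to avoid Proposition \ref{product}, would be to iterate: $T_n(R)$ is $J$-reflexive iff $T_{n-1}(R)\times R$ is, but routing through the product proposition is cleanest. I would present the proof in essentially two sentences: invoke Theorem \ref{quotient} with the strictly-upper-triangular ideal, then invoke Proposition \ref{product}.

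\begin{proof}
Let $I$ denote the set of strictly upper triangular matrices in $T_n(R)$. Then $I$ is an ideal of $T_n(R)$ with $I^n=0$, and $T_n(R)/I\cong\prod_{i=1}^{n}R$. By Theorem \ref{quotient}, $T_n(R)$ is $J$-reflexive if and only if $\prod_{i=1}^{n}R$ is $J$-reflexive, and by Proposition \ref{product} the latter holds if and only if $R$ is $J$-reflexive.
\end{proof}
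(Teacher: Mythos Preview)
Your proof is correct and uses essentially the same approach as the paper: quotient $T_n(R)$ by the nilpotent ideal of strictly upper triangular matrices, apply Theorem \ref{quotient}, and then Proposition \ref{product}. The only cosmetic differences are that the paper treats $n=2$ explicitly and then gestures at induction, and it derives $(2)\Rightarrow(1)$ separately via Theorem \ref{morita_inv.}(1); your version handles general $n$ directly and obtains both implications at once from the biconditionals in Theorem \ref{quotient} and Proposition \ref{product}, which is arguably cleaner.
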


\begin{proof}
$(1)\Rightarrow (2)$ For $n=1$ it is clear. Consider the ring $T_2(R)$.
Choose the ideal $I=\left(%
\begin{array}{cc}
0 & R \\
0 & 0 \\
\end{array}%
\right)$. It is clear that $I^2=0$. So $T_2(R)/I\cong R\times R$.
By Proposition \ref{product}, $T_2(R)/I$ is $J$-reflexive. Hence
$T_2(R)$ is $J$-reflexive, by Theorem \ref{quotient}. By
induction,  $T_n(R)$ is $J$-reflexive for
any $n\in \Bbb Z^+$.\\
$(2)\Rightarrow (1)$ It is evident from Theorem
\ref{morita_inv.}(1).
\end{proof}

\begin{prop}
Let $R$ be a ring and $e^2=e\in R$ is central. Then, $R$ is a
$J$-reflexive ring if and only if $eR$ and $(1-e)R$ are
$J$-reflexive.
\end{prop}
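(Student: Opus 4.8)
The plan is to reduce this statement to the finite-product case already established in Proposition \ref{product}. The crucial point is that a central idempotent decomposes $R$ as an (internal) direct product of the two corner rings, so the asserted equivalence becomes nothing more than a transported copy of that proposition.

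First I would record that, since $e$ is central, both $eR$ and $(1-e)R$ are two-sided ideals of $R$, and each is itself a ring with identity element $e$, respectively $1-e$. Then I would introduce the map $\varphi\colon R\to eR\times(1-e)R$ defined by $\varphi(r)=(er,\,(1-e)r)$ and check that it is a ring isomorphism: additivity is immediate; multiplicativity uses centrality of $e$, since $er\cdot es=e(rs)$ and $(1-e)r\cdot(1-e)s=(1-e)(rs)$; injectivity follows from $e+(1-e)=1$; and surjectivity follows from $e(1-e)=0$ and $e^2=e$, because $(ea,\,(1-e)b)=\varphi\bigl(ea+(1-e)b\bigr)$. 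Under this isomorphism $R$ is identified with $eR\times(1-e)R$.

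With this identification in hand, Proposition \ref{product}, applied to the two-element index set $\{1,2\}$, yields directly that $eR\times(1-e)R$ is $J$-reflexive if and only if each of $eR$ and $(1-e)R$ is $J$-reflexive; pulling back along $\varphi$ gives the claimed equivalence. The only background facts needed are that $J(eR)=eJ(R)$ for a central idempotent $e$ and, correspondingly, that the Jacobson radical of the product is the product of the radicals — both standard and already used implicitly in Theorem \ref{morita_inv.} and Proposition \ref{product}. I do not anticipate any genuine obstacle: the argument is essentially a translation of the direct-product result. (If desired, the forward implication can instead be quoted from Theorem \ref{morita_inv.}(1), noting that $eRe=eR$ and $(1-e)R(1-e)=(1-e)R$ when $e$ is central; only the converse really requires the product decomposition.)
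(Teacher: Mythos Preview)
Your proposal is correct and follows essentially the same route as the paper: the paper quotes Theorem \ref{morita_inv.}(1) for the forward implication and, for the converse, invokes the standard isomorphism $R\cong eR\times(1-e)R$ together with Proposition \ref{product}. Your version merely supplies the routine details of that isomorphism and notes (as the paper implicitly does) that the forward direction also follows from the product decomposition.
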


\begin{proof} The necessity is obvious by Theorem \ref{morita_inv.}.
For the sufficiency suppose that $eR$ and $(1-e)R$ are
$J$-reflexive for a central idempotent $e\in R$. It is well-known
that $R\cong eR\times(1-e)R$. By Proposition \ref{product}, $R$ is
$J$-reflexive.
\end{proof}

For an algebra $R$ over a commutative ring $S$, the \emph{ Dorroh
extension} $I(R; S )$ of $R$ by $S$ is the additive abelian group
$I(R; S) = R\oplus S$ with multiplication $(r, v)(s,w) = (rs, rw +
vs + vw)$.

\begin{prop}
Let $R$ be a ring and $M=I(R;S)$ Dorroh extension of $R$ by
commutative ring $S$. Assume that for all $s\in S$ there exists
$s'\in S$ such that $s+s'+ss' = 0$. Then the following are
equivalent.
\begin{enumerate}
\item $R$ is $J$-reflexive.
\item $M$ is $J$-reflexive.
\end{enumerate}
\end{prop}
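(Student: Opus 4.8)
The plan is to show the two implications separately, relating the ring $R$ to the Dorroh extension $M=I(R;S)$ through a suitable ideal of $M$. The key structural observation is that the map $R\to M$, $r\mapsto (r,0)$, is \emph{not} a ring homomorphism unless we are careful, but the subset $R\times\{0\}$ together with the hypothesis on $S$ lets us realize $R$ as a quotient and as a corner of $M$. More precisely, I would first compute $J(M)$, or at least identify an ideal $I\cong S$ (or a nilpotent/radical ideal) so that $M/I\cong R$ in one direction and $eMe\cong R$ for a suitable idempotent in the other.

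For $(2)\Rightarrow(1)$: I would exhibit $R$ as a corner ring of $M$. The element $e=(1,0)\in M$ satisfies $e^2=(1,0)$, so $e$ is an idempotent of $M$, and one checks $eMe=\{(r,0):r\in R\}$ with the induced multiplication $(r,0)(s,0)=(rs,0)$, so $eMe\cong R$ as rings. Then Theorem \ref{morita_inv.}(1) immediately gives that $R$ is $J$-reflexive whenever $M$ is. This direction needs only the idempotent computation, not the cancellation hypothesis on $S$.

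For $(1)\Rightarrow(2)$: here the hypothesis ``for all $s\in S$ there is $s'$ with $s+s'+ss'=0$'' is exactly the condition that makes the ``$S$-part'' of $M$ sit inside the Jacobson radical — it says every element $(0,s)$ with $s\in S$ behaves like a quasi-regular element relative to the multiplication $(0,s)+(0,s')+(0,s)(0,s')=(0,s+s'+ss')=(0,0)$. I would argue that $I=\{(0,s):s\in S\}$ is an ideal of $M$ (using that $S$ is commutative and the multiplication formula) contained in $J(M)$, and that $M/I\cong R$. Then the implication follows from the earlier result that $R/J(R)$-type quotients reflect $J$-reflexivity downward: since $I\subseteq J(M)$ and $M/I\cong R$ is $J$-reflexive, the Proposition stating ``if $R/I$ is $J$-reflexive and $I\subseteq J(R)$ then $R$ is $J$-reflexive'' applies verbatim (with $M$ in the role of $R$).

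The main obstacle I anticipate is verifying carefully that $I=\{(0,s):s\in S\}\subseteq J(M)$. One must show that $(1,0)-(0,s)(r,v)=(1,0)-(0,sr+sv)=(1,-sr-sv)$ is a unit in $M$ for every $(r,v)\in M$; equivalently that $(1,-s)$ is a unit for each $s$ in the ideal generated by $S$ inside $S$, which is where the cancellation hypothesis $s+s'+ss'=0$ is used to produce the inverse $(1,-s')$ (one checks $(1,-s)(1,-s')=(1,-s-s'+ss')$ and needs $-s-s'+ss'=0$ — so one should double-check the sign conventions and possibly pass to $-s$ in the hypothesis, which is harmless since $S$ is closed under negation). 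Once $I\subseteq J(M)$ is established and $M/I\cong R$ is checked via the obvious projection $(r,v)\mapsto r$ (whose kernel is exactly $I$), both directions close quickly using results already proved in the paper.
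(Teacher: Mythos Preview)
Your strategy differs from the paper's, which handles both directions by direct element computation: it expands $(a_1,b_1)(x,y)(a_2,b_2)$ coordinate-wise, reads the condition on $R$ off the first coordinate, and uses $(0,S)\subseteq J(M)$ (exactly what the hypothesis on $S$ says) to pass between $J(R)$ and $J(M)$.

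The structural shortcuts you propose are appealing, but the two halves cannot both hold under a single consistent Dorroh multiplication. Under the paper's stated product $(r,v)(s,w)=(rs,\,rw+vs+vw)$, the first-coordinate projection $(r,v)\mapsto r$ \emph{is} a ring homomorphism, so your $(1)\Rightarrow(2)$ via $I=\{0\}\times S\subseteq J(M)$ and $M/I\cong R$ goes through cleanly---but then $(1_R,0)$ is the \emph{identity} of $M$ (check: $(1,0)(s,w)=(s,w)$), so $eMe=M$ and your corner argument for $(2)\Rightarrow(1)$ collapses to a tautology. Under the usual Dorroh product $(r,v)(s,w)=(rs+vs+wr,\,vw)$ the situation is reversed: now $e=(1_R,0)$ is a genuine proper idempotent with $eMe\cong R$, so $(2)\Rightarrow(1)$ is fine, but $(r,v)(0,s)=(sr,\,vs)$ has nonzero first coordinate, so $I=\{0\}\times S$ is not an ideal and $(r,v)\mapsto r$ is not multiplicative---your $(1)\Rightarrow(2)$ breaks. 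In either reading exactly one direction must be redone. The paper's elementwise computation works; alternatively, since $R$ has an identity, the map $(r,v)\mapsto(r+v\cdot 1_R,\,v)$ is a ring isomorphism $M\cong R\times S$, after which Proposition~\ref{product} together with the commutativity of $S$ handles both directions at once.
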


\begin{proof}
$(1) \Rightarrow (2) $ Let $(a_1,b_1)M(a_2,b_2)=(0,0)$ for
$(a_1,b_1),(a_2,b_2)\in M$. So for any $(x,y)\in M$, we have
$(a_1,b_1)(x,y)(a_2,b_2)=(0,0)$. Then
$(a_1xa_2,a_1xb_2+a_1ya_2+b_1xa_2+b_1ya_2+a_1yb_2+b_1xb_2+b_1yb_2)=(0,0)$.
Hence, $a_1xa_2=0$ and
$a_1xb_2+a_1ya_2+b_1xa_2+b_1ya_2+a_1yb_2+b_1xb_2+b_1yb_2=0$. As
$R$ is $J$-reflexive, $a_2xa_1\in J(R)$ for any $x\in R$. Thus,
$(a_2,b_2)(x,y)(a_1,b_1)=(a_2xa_1,*)$. By hypothesis,
$(0,S)\subseteq J(M)$. It can be easy to show that $(a_2xa_1,0)\in
J(M)$ for each $x\in R$. Therefore, $(a_2,b_2)S(a_1,b_1)\subseteq
J(M)$.\\
$(2) \Rightarrow (1) $ Let $aRb=0$ for $a,b\in R$. Then
$(a,0)M(b,0)=(0,0)$. Since $M$ is $J$-reflexive,
$(b,0)M(a,0)\subseteq J(M)$. By hypothesis, $(0,S)\subseteq J(M)$.
This implies that $(bRa,0)\subseteq J(S)$. Hence, $bRa\subseteq
J(R)$.
\end{proof}

If $R$ is a ring and $f: R \rightarrow R$ is a ring homomorphism,
let $R[[x, f]]$ denote \emph{the ring of skew formal power series
over $R$}; that is all formal power series in $x$ with
coefficients from $R$ with multiplication defined by $xr = f(r)x$
for all $r \in R$. Note that $J(R[[x, f]]) = J(R)+ < x>$. Since
$R[[x, f]] \cong I(R;< x>)$ where $< x >$ is the ideal generated
by $x$, we have the following result.

\begin{cor}
Let R be a ring and $ f: R\rightarrow R$ a ring homomorphism. Then
the following are equivalent.
\begin{enumerate}
\item $R$ is a $J$-reflexive ring.
\item $R[[x,f]]$ is $J$-reflexive.
\end{enumerate}
\end{cor}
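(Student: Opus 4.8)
The plan is to deduce this corollary from the preceding Dorroh extension proposition by verifying that $R[[x,f]]$ is an instance of that construction and that the technical hypothesis of the proposition is automatically satisfied in this setting. First I would make precise the isomorphism $R[[x,f]] \cong I(R;\langle x\rangle)$: here $S = \langle x \rangle$ is the ideal of $R[[x,f]]$ generated by $x$, which is a commutative ring (without identity in the ambient sense, but we treat $R[[x,f]]$ as an $S$-algebra in the loose sense needed for the Dorroh construction), and every element of $S$ is of the form $g = \sum_{i\geq 1} a_i x^i$, i.e.\ a power series with zero constant term. The point is that $S$ consists of topologically nilpotent elements, so $1+g$ is a unit in $R[[x,f]]$ for every $g \in S$.

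Next I would check the hypothesis ``for all $s \in S$ there exists $s' \in S$ with $s + s' + ss' = 0$.'' This is exactly the statement that $(1+s)$ has an inverse of the form $(1+s')$ with $s' \in S$; indeed $(1+s)(1+s') = 1 + s + s' + ss' = 1$. Since $s \in \langle x \rangle$ has zero constant term, the geometric-type series $s' = \sum_{k\geq 1}(-s)^k$ converges $x$-adically in $R[[x,f]]$ and lies in $\langle x \rangle = S$, and one verifies $s + s' + ss' = 0$ directly. Thus the hypothesis of the Dorroh proposition holds, and I can invoke it verbatim: $R$ is $J$-reflexive if and only if $I(R;\langle x\rangle) = R[[x,f]]$ is $J$-reflexive. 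I would also note, as the excerpt already records, that $J(R[[x,f]]) = J(R) + \langle x \rangle$, which is the compatibility needed for the Jacobson-radical bookkeeping in the Dorroh argument to transport correctly.

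The only genuinely delicate point — and the one I would spend the most care on — is justifying the claimed isomorphism $R[[x,f]] \cong I(R;\langle x\rangle)$ as rings and confirming that the Dorroh proposition, which was stated for an algebra $R$ over a \emph{commutative ring with identity} $S$, still applies when $S = \langle x \rangle$ carries no identity of its own. The resolution is that the Dorroh multiplication $(r,v)(s,w) = (rs, rw+vs+vw)$ only ever uses the ring structure on $S$ and the bimodule action, never a unit of $S$; the construction adjoins a fresh identity, so it is harmless that $\langle x \rangle$ lacks one. Concretely, the map $I(R;\langle x\rangle) \to R[[x,f]]$ sending $(r,v) \mapsto r + v$ (with $r$ viewed as the constant power series and $v \in \langle x\rangle$) is easily checked to be a bijective ring homomorphism using $xr = f(r)x$. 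Once this identification is in place, the corollary is immediate.

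\begin{proof}
Write $S = \langle x \rangle$ for the ideal of $R[[x,f]]$ generated by $x$; every element of $S$ has the form $\sum_{i\geq 1}a_i x^i$. The map $I(R;S)\to R[[x,f]]$, $(r,v)\mapsto r+v$, is a ring isomorphism, since for $v = \sum_{i\ge 1}a_ix^i$ and $w = \sum_{i\ge 1}b_ix^i$ one has $(r+v)(s+w) = rs + (rw+vs+vw)$, matching the Dorroh multiplication. For $s \in S$, the series $s' = \sum_{k\ge 1}(-s)^k$ lies in $S$ (it has zero constant term) and satisfies $(1+s)(1+s') = 1+s+s'+ss' = 1$, hence $s+s'+ss' = 0$; thus the hypothesis of the preceding proposition on $I(R;S)$ is satisfied. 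Applying that proposition, $R$ is $J$-reflexive if and only if $I(R;S)\cong R[[x,f]]$ is $J$-reflexive.
\end{proof}
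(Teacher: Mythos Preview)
Your approach is exactly the paper's: the paper simply records $R[[x,f]]\cong I(R;\langle x\rangle)$ and invokes the preceding Dorroh proposition without further argument, and you fill in precisely the details it omits (the explicit isomorphism and the verification of the quasi-regularity hypothesis $s+s'+ss'=0$). One small slip worth fixing: your assertion that $S=\langle x\rangle$ is commutative is false in general---for instance $(ax)(bx)=af(b)x^{2}$ while $(bx)(ax)=bf(a)x^{2}$---but this is harmless here, since neither the Dorroh multiplication formula nor the proof of the proposition actually uses commutativity of $S$, and your geometric-series inverse for $1+s$ is two-sided regardless.
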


If $f$ is taken $f=1_R:R\rightarrow R$ $(i.e.~1_R(r)=r ~\mbox{for
all}~ r\in R)$, we have $R[[x]] = R[[x, 1_R]]$ is the ring of
formal power series over $R$.
\begin{cor}
The following are equivalent for a ring $R$.
\begin{enumerate}
\item $R$ is a $J$-reflexive ring.
\item $R[[x]]$ is $J$-reflexive.
\end{enumerate}
\end{cor}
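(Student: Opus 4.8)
The plan is simply to specialize the preceding corollary to the case $f = 1_R$. As noted just above, $R[[x]]$ \emph{is} the ring $R[[x,1_R]]$ by definition, and $1_R\colon R\to R$ is a ring homomorphism, so the equivalence of $(1)$ and $(2)$ is an immediate instance of that corollary; there is nothing further to do beyond citing it.

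If instead one wants a direct argument that avoids the skew power series and Dorroh constructions, I would build it on the single fact $J(R[[x]])=J(R)+xR[[x]]$, equivalently: an element of $R[[x]]$ lies in $J(R[[x]])$ exactly when its constant term lies in $J(R)$. For $(2)\Rightarrow(1)$, view $R$ inside $R[[x]]$ as constant series: if $aRb=0$ in $R$ then $a\,R[[x]]\,b=0$ coefficientwise, so $J$-reflexivity of $R[[x]]$ forces $b\,R[[x]]\,a\subseteq J(R[[x]])$, and reading off constant terms gives $bRa\subseteq J(R[[x]])\cap R=J(R)$. For $(1)\Rightarrow(2)$, start from $f\,R[[x]]\,g=0$ with $f,g\in R[[x]]$ having constant terms $a_0,b_0$; testing against constant middle terms and looking at the degree-zero coefficient yields $a_0Rb_0=0$, so $J$-reflexivity of $R$ gives $b_0Ra_0\subseteq J(R)$. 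Since the constant term of any product $ghf$ (with $h\in R[[x]]$) is $b_0h_0a_0\in b_0Ra_0\subseteq J(R)$, every such $ghf$ lies in $J(R)+xR[[x]]=J(R[[x]])$, hence $g\,R[[x]]\,f\subseteq J(R[[x]])$.

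I do not anticipate a real obstacle here: the only point needing care is the description of $J(R[[x]])$ (and its consequence $J(R[[x]])\cap R=J(R)$), and once that is in hand the argument passes entirely through the single degree-zero coefficient. This is in contrast with the $R[x]$ case, where the higher coefficients would force the more delicate annihilator bookkeeping of Kwak and Lee; in the power series setting those coefficients are automatically absorbed into $xR[[x]]\subseteq J(R[[x]])$.
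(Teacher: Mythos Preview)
Your primary plan---specializing the preceding corollary on $R[[x,f]]$ to $f=1_R$---is exactly how the paper proves this; there is no additional content in the paper's own argument. Your supplementary direct proof via $J(R[[x]])=J(R)+xR[[x]]$ is also correct and is a self-contained alternative the paper does not give.
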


Let $R$ be a ring and $u\in R$. Recall that $u$ is \emph{right
regular} if $ur=0$ implies $r=0$ for $r\in R$. Similarly,
\emph{left regular} element can be defined. An element is
\emph{regular} if it is both left and right regular.

\begin{prop}\label{localization}
Let $R$ be a ring and $M$ multiplicatively closed subset of $R$
consisting of central regular elements. Then the following are
equivalent.
\begin{enumerate}
\item $R$ is $J$-reflexive.
\item $S=M^{-1}R=\{\frac{a}{b}:a\in R,b\in M\}$ is $J$-reflexive.
\end{enumerate}
\end{prop}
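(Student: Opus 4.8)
The plan is to prove the two implications separately, working throughout with the characterization of $J$-reflexivity via ideals (part (6) of the main Theorem): $R$ is $J$-reflexive iff $IK=0$ implies $KI\subseteq J(R)$ for all ideals $I,K$. The key facts I will need about central localization are: (i) every ideal of $S=M^{-1}R$ has the form $M^{-1}A$ for an ideal $A$ of $R$ (namely $A$ is the contraction), and $M^{-1}A\cdot M^{-1}B=M^{-1}(AB)$; (ii) $M^{-1}A=0$ in $S$ iff $A$ is annihilated by some element of $M$, which by regularity of the elements of $M$ forces $A=0$; (iii) $J(M^{-1}R)=M^{-1}J(R)$, or at least $M^{-1}J(R)\subseteq J(M^{-1}R)$, which is the standard behavior of the Jacobson radical under central localization when $M$ consists of regular elements.

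For $(1)\Rightarrow(2)$, suppose $\mathcal I,\mathcal K$ are ideals of $S$ with $\mathcal I\mathcal K=0$. Write $\mathcal I=M^{-1}I$ and $\mathcal K=M^{-1}K$ for ideals $I,K$ of $R$; then $M^{-1}(IK)=\mathcal I\mathcal K=0$, and since $IK$ is an ideal of $R$ killed by a central regular element, $IK=0$. By $J$-reflexivity of $R$ we get $KI\subseteq J(R)$, hence $\mathcal K\mathcal I=M^{-1}(KI)\subseteq M^{-1}J(R)\subseteq J(M^{-1}R)=J(S)$, so $S$ is $J$-reflexive.

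For $(2)\Rightarrow(1)$, suppose $aRb=0$ in $R$ for $a,b\in R$. Viewing $a,b$ inside $S$ via $r\mapsto \frac r1$, one checks $\frac a1 S\frac b1=0$: indeed for $\frac r s\in S$ with $s$ central, $\frac a1\cdot\frac rs\cdot\frac b1=\frac{arb}{s}=0$ since $arb=0$. By $J$-reflexivity of $S$, $\frac b1 S\frac a1\subseteq J(S)$, so in particular $\frac{bra}{1}\in J(S)$ for every $r\in R$. I then need to descend this to $bra\in J(R)$: since $bra\in R\cap J(S)$ and $J(S)=M^{-1}J(R)$, an element $\frac{x}{1}$ with $x\in R$ lying in $M^{-1}J(R)$ means $sx\in J(R)$ for some $s\in M$, and as $s$ is central and regular (hence not a zero divisor, and its image in $R/J(R)$ can be argued to act injectively or one uses that $J(R)$ is the contraction of $J(S)$), one concludes $x\in J(R)$. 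Thus $bRa\subseteq J(R)$ and $R$ is $J$-reflexive.

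The main obstacle will be the clean handling of the Jacobson radical under localization — specifically establishing $J(M^{-1}R)=M^{-1}J(R)$ (or the two containments I actually use) for a noncommutative ring $R$ localized at a central multiplicative set of regular elements. One direction, $M^{-1}J(R)\subseteq J(M^{-1}R)$, follows because $1-\frac{j}{s}\frac{a}{b}$ is a unit in $S$ for $j\in J(R)$ (clear denominators and use that $1-j\cdot(\text{ring element})$ is a unit in $R$); the reverse containment, needed for the descent step, requires showing that if $\frac x1\in J(S)$ with $x\in R$ then $x\in J(R)$, i.e.\ $1-xr\in U(R)$ for all $r\in R$ — this uses that $1-\frac{x}{1}\frac{r}{1}=\frac{1-xr}{1}$ being a unit in $S$ together with centrality/regularity of $M$ forces $1-xr$ to already be a unit in $R$ (a unit $\frac u1$ in $S$ with $u\in R$ has inverse $\frac v s$, so $uv=s$ is regular central, forcing $u$ to be, with some care, a unit in $R$). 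I expect this localization-of-the-radical lemma to be where the real work sits; everything else is bookkeeping with the dictionary between ideals of $R$ and ideals of $M^{-1}R$.
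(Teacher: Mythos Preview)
Your overall plan mirrors the paper's: both directions ultimately rest on comparing $J(R)$ with $J(S)$ across the localization map, and the paper (working element-wise rather than through ideals) likewise invokes ``$J(R)\subseteq J(S)$'' for $(1)\Rightarrow(2)$ and silently passes from $bra\in J(S)$ to $bra\in J(R)$ for $(2)\Rightarrow(1)$. So your approach is not different in spirit; the difference is only that you route $(1)\Rightarrow(2)$ through the ideal-theoretic characterization, which is a cosmetic change.

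The substantive problem is that the two radical comparisons you need are \emph{both false} for central localizations at regular elements, and your sketched justifications do not repair this. For $(1)\Rightarrow(2)$ you use $M^{-1}J(R)\subseteq J(S)$; take $R=\mathbb{Z}_{(p)}$ and $M=\{p^n:n\ge 0\}$, so $S=\mathbb{Q}$: then $M^{-1}J(R)=M^{-1}(pR)=\mathbb{Q}$ while $J(S)=0$. Your ``clear denominators'' argument reduces to showing $sb-ja$ is a unit in $S$, but this is not of the form $1-j r$ with $r\in R$, and indeed fails in the example just given. For $(2)\Rightarrow(1)$ you need $R\cap J(S)\subseteq J(R)$, and you try to get it from ``$\tfrac{u}{1}\in U(S)\Rightarrow u\in U(R)$''; take $R=\mathbb{Z}$, $M=\{2^n\}$, $u=2$ to see that this implication fails (and with $M=\mathbb{Z}\setminus p\mathbb{Z}$ one gets $p\in R\cap J(S)$ while $J(R)=0$, so the needed containment itself fails). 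Thus the ``localization-of-the-radical'' lemma you flagged as the crux is not merely where the work sits --- it is simply not available in this generality, and neither direction of your argument goes through as written. The paper's element-wise proof leans on the very same assertions without further argument, so you have not diverged from it; you have, however, correctly isolated the step that would require genuine additional hypotheses or a different idea.
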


\begin{proof}
$(1)\Rightarrow (2)$ Let $aSb=0$ for $a,b\in S$. So there exist
$a_1,b_1\in R$ and $u^{-1},v^{-1}\in M$ such that $a=a_1u^{-1}$
and $b=b_1v^{-1}$. Then $0=aSb=a_1u^{-1}Sb_1v^{-1}=a_1Sbv^{-1}$.
Hence for any $rs^{-1}\in S$ we have $a_1rs^{-1}bv^{-1}$. Thus,
$a_1rb_1=0$ for each $r\in R$. As $R$ is $J$-reflexive,
$b_1ra_1\in J(R)$. This implies that $b_1v^{-1}rs^{-1}a_1u^{-1}\in
J(R)$. As $J(R)\subseteq J(S)$, $aSb\subseteq J(S)$. \\
$(2) \Rightarrow (1)$ Let $aRb=0$ for $a,b\in R$ and $u,v\in M$.
So we have $auRbv=0$. Then for any $m\in M$ and $r\in R$
$aurmbv=0$. Since $S$ is $J$-reflexive, $bvrmau\in J(S)$. If we
multiply $bvrmau$ with inverses of $u,m,v$, then we have $bra\in
J(R)$ for any $r\in R$. This completes the proof.
\end{proof}

The following result is a direct consequence of Proposition
\ref{localization}.

\begin{cor}\label{polynomial}
Let $R$ be a ring. Then the following are equivalent.
\begin{enumerate}
\item $R[x]$ is $J$-reflexive.
\item $R[x,x^{-1}]$ is $J$-reflexive.
\end{enumerate}
\end{cor}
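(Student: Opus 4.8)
The plan is to deduce this from Proposition \ref{localization} by exhibiting $R[x,x^{-1}]$ as a central localization of $R[x]$ at a suitable multiplicatively closed set. Concretely, inside the polynomial ring $R[x]$ consider the set $M = \{1, x, x^2, x^3, \dots\}$ of powers of the central indeterminate $x$. First I would check that $M$ is a multiplicatively closed subset of the center of $R[x]$ consisting of regular elements: centrality is immediate since $x$ commutes with every element of $R[x]$, closure under multiplication is clear, and $x^n$ is a (two-sided) non-zero-divisor in $R[x]$ because leading-coefficient considerations show that multiplication by $x^n$ is injective on both sides (it merely shifts coefficients). Thus $M$ satisfies the hypotheses of Proposition \ref{localization}.

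Next I would identify the localization $M^{-1}R[x]$ with the Laurent polynomial ring $R[x,x^{-1}]$. This is the standard fact that adjoining a two-sided inverse to the central regular element $x$ produces exactly $R[x,x^{-1}]$; the map sending $\tfrac{f(x)}{x^n}$ to $x^{-n}f(x)$ is a well-defined ring isomorphism $M^{-1}R[x] \xrightarrow{\ \sim\ } R[x,x^{-1}]$. With this identification in hand, Proposition \ref{localization} applied to the ring $R[x]$ and the set $M$ yields that $R[x]$ is $J$-reflexive if and only if $M^{-1}R[x] = R[x,x^{-1}]$ is $J$-reflexive, which is precisely the asserted equivalence $(1)\Leftrightarrow(2)$.

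The only genuine point requiring care — and the step I expect to be the main obstacle — is verifying that the elements of $M$ are regular in $R[x]$, since $R$ is an arbitrary (possibly non-reduced, non-domain) ring. One must argue that if $x^n g(x) = 0$ or $g(x) x^n = 0$ in $R[x]$ then $g(x) = 0$; this follows by comparing coefficients, as multiplication by $x^n$ simply relabels the coefficient of $x^k$ as the coefficient of $x^{k+n}$ without mixing or cancelling coefficients, so no nilpotence or zero-divisor phenomena in $R$ can interfere. Once this regularity is recorded, the rest is a direct invocation of Proposition \ref{localization} together with the isomorphism above, and no further computation is needed.
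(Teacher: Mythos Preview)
Your proposal is correct and follows exactly the route the paper intends: the corollary is stated as a direct consequence of Proposition~\ref{localization}, applied to the polynomial ring $R[x]$ with the multiplicatively closed set $M=\{1,x,x^{2},\dots\}$ of central regular elements, together with the standard identification $M^{-1}R[x]\cong R[x,x^{-1}]$. Your additional verification that each $x^{n}$ is regular in $R[x]$ (via the coefficient-shift argument) is the only detail needed, and it is handled correctly.
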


\end{document}